\documentclass[10pt]{amsart}
\usepackage{amsmath}
\usepackage{amsfonts}
\usepackage{amssymb}
\usepackage{graphicx}
\usepackage{amsthm,graphicx,color,yfonts}
\usepackage{pdfsync}
\usepackage{epstopdf}%
\usepackage{pifont}
\usepackage{bbm}
\usepackage{a4wide}

\usepackage[colorlinks=true]{hyperref}
\hypersetup{linkcolor=red,citecolor=blue,filecolor=dullmagenta,urlcolor=blue} 

\usepackage{wrapfig}
\usepackage{tikz}
\usetikzlibrary{arrows,calc,decorations.pathreplacing}
\definecolor{light-gray1}{gray}{0.90}
\definecolor{light-gray2}{gray}{0.80}
\definecolor{light-gray3}{gray}{0.60}

\newcommand{\R} {\mathbb R}
\newcommand{\cuad}{{\sqcap\kern-.68em\sqcup}}

\newcommand{\be}{\begin{equation}}
\newcommand{\ee}{\end{equation}}

\definecolor{darkgreen}{rgb}{0.2,0.7,0.1}

\renewcommand{\Re}{\mathrm{Re}}

\newcommand{\T}{\mathbb{T}}

\def\bm{\left( \begin{array}{cc}}

\def\endm{\end{array}\right)}

\newcommand{\ba}{\begin{equation*}}
\newcommand{\ea}{\begin{equation*}}
\newcommand{\bea}{\begin{eqnarray}}
\newcommand{\eea}{\end{eqnarray}}
\newcommand{\bee}{\begin{eqnarray*}}
\newcommand{\eee}{\end{eqnarray*}}
\newcommand{\ben}{\begin{enumerate}}
\newcommand{\een}{\end{enumerate}}

\numberwithin{equation}{section}

\newtheorem{theorem}{Theorem}[section]
\newtheorem*{theorem*}{Theorem}
\newtheorem{proposition}{Proposition}[section]
\newtheorem{corollary}{Corollary}[section]
\newtheorem{lemma}{Lemma}[section]
\newtheorem{definition}{Definition}[section]

\theoremstyle{remark}
\newtheorem{remark}{Remark}[section]

\title[Well-posedness of gBBM]{Well-posedness issues for the generalized Benjamin--Bona--Mahony equation}

\author[S. Kim]{Seunghyun Kim}
\address{Department of Mathematics, Ewha Womans University, Seoul 03760, Korea}
\email{seunghyunkim@ewhain.net}

\author[C. Kwak]{Chulkwang Kwak}
\address{Department of Mathematics, Ewha Womans University, Seoul 03760, Republic of Korea}
\address{Korea Institute for Advanced Study, Seoul 02455, Republic of Korea}
\email{ckkwak@ewha.ac.kr}

\subjclass[2020]{35Q35,76B15}
\thanks{{\it Keywords and phrases.} Generalized BBM equation, Low-regularity well-posedness, Unconditional uniqueness, Weak ill-posedness, Global well-posedness}
\begin{document}

\begin{abstract}
In this paper, we consider the one-dimensional generalized Benjamin--Bona--Mahony (gBBM) equation
\[(1-\partial_x^2)u_t+(u+u^p)_x=0,\qquad p=2,3,4,\dots,\]
posed either on the real line $\mathbb R$ or on the torus $\mathbb T$. This equation may be viewed as a regularized model for the propagation of long-crested surface water waves. The main results of this work are threefold:

\medskip

First, we establish \emph{unconditional local well-posedness} in the class $C([0,T];H^s)$ without imposing any auxiliary spaces for
\[s\ge \frac{p-2}{2p},\]
which is \emph{sharp} in the sense that the multilinear estimate in $H^s$ is optimal. In addition, we prove \emph{unconditional uniqueness} for all distributional solutions in $L^\infty((0,T);H^s)$.

\medskip

Second, we show that below this regularity threshold, the flow map cannot be of class $C^p$. Precisely, if the flow map is well-defined and continuous near the origin from $H^s$ to $C([0,T];H^s)$ for every $s<\frac{p-2}{2p}$, then it cannot be of class $C^p$ at the origin. The proof is based on a high-to-low frequency interaction, implemented differently on $\mathbb R$ and $\mathbb T$.

\medskip

Third, in the odd-power case, we prove \emph{global well-posedness} below $H^1$ in the following cases: $p=3$ with $s\ge \frac14$, and $p=5$ with $s>\frac12$. To the best of our knowledge, these are the first global well-posedness results in the Sobolev framework for the generalized BBM equation below $H^1$. The argument is based on the Bona--Tzvetkov approach \cite{BT}, while being initially inspired by Bourgain's high--low method \cite{Bourgain1998, Bourgain1999}. A key new ingredient is the use of a Hamiltonian conservation law below the $H^1$ energy level. This allows us to control the higher-degree nonlinear contributions in the energy estimate, thereby preventing the Gr\"onwall iteration from blowing up.

\end{abstract}
\maketitle

\section{Introduction}

In this paper, we consider the one-dimensional generalized Benjamin--Bona--Mahony (gBBM) equation \cite{BBM}, also referred to as the regularized long wave equation:
\begin{equation}\label{eq:gBBM}
(1-\partial_x^2)u_t+\left(u+u^p\right)_x=0,\qquad (t,x)\in \mathbb R\times\mathcal M,\qquad p=2,3,4,\dots.
\end{equation}
Here $u=u(t,x)$ is a real-valued scalar function on $\mathcal M=\mathbb R$ or $\mathbb T$. The original BBM equation, corresponding to the case $p=2$, was first derived by Benjamin, Bona, and Mahony \cite{BBM}, and independently by Peregrine \cite{Peregrine}, as a model for the uni-directional propagation of long-crested surface water waves. For more details on the physical relevance of the BBM model, see, for instance, \cite{BPS1981, BCD2005, AABCW2006} and the references therein.

\medskip

Physically, the term $(1-\partial_x^2)u_t$ has a smoothing effect on the temporal evolution of $u$, thereby regularizing the time derivative. This modification suppresses high-frequency instabilities and leads to a more accurate description of low-frequency, long-wave dynamics compared to the classical Korteweg--de Vries (KdV) equation
\[u_t + (u_{xx} + u^2)_x = 0.\]
For this reason, the BBM equation is often preferred in numerical simulations and theoretical studies of water wave propagation. From a mathematical perspective, this regularization is formally obtained via the standard \emph{Boussinesq trick}, which makes the BBM equation more suitable for standard energy methods and facilitates local well-posedness in low-regularity spaces. However, unlike the KdV equation, the BBM equation lacks complete integrability \cite{BPS1980,MMM}.

\medskip

For the original BBM equation ($p=2$), it is well known that the Cauchy problem is globally well-posed in $H^s$ for $s\ge 0$ (see \cite{BT}). Below this threshold, low-regularity obstructions have also been studied in both settings: Panthee \cite{Panthee} proved ill-posedness on $\mathbb R$, while Bona and Dai \cite{BD2017} established norm inflation behavior on $\mathbb T$. For the generalized case with $p=3,4,\dots$, the global well-posedness of \eqref{eq:gBBM} in $H^1$ follows from energy methods based on the $H^1$ conservation law (see \cite{BBM}). The coercivity of the conserved energy functional
\[E[u](t):=\frac12\int_{\mathcal M}\left(u^2+u_x^2\right)(t,x)\,dx\]
provides uniform bounds on the $H^1$-norm of solutions. Together with a standard continuity argument, this allows one to extend local solutions globally in time.

\medskip

While the original BBM equation ($p=2$) has been extensively analyzed in various function spaces \cite{BT, BCH2014, Wang2016, AC2019}, the low-regularity theory for the generalized BBM equation with $p\ge 3$ remains comparatively less developed (see \cite{BC2003}). The present paper fits naturally into this low-regularity theory for the generalized BBM equation. More precisely, we deal with the higher-degree terms on both $\mathbb R$ and $\mathbb T$, and investigate well-posedness issues that include sharp unconditional local well-posedness, a weak ill-posedness below the critical threshold, and global well-posedness in the Sobolev framework below $H^1$ in the odd-power cases $p=3$ and $p=5$. We also remark that, for the original BBM equation ($p=2$), almost sure global well-posedness below the deterministic threshold ($L^2$) has been established by Forlano \cite{Forlano2020}.

\medskip

In the low-regularity analysis, it is convenient to rewrite \eqref{eq:gBBM} in integral form. By Duhamel's principle, \eqref{eq:gBBM} is equivalent to
\begin{equation}\label{eq:Duhamel}
u(t)=S(t)u_0-\int_0^t S(t-t')\phi(\partial_x)\big(u^p(t')\big)\,dt',
\end{equation}
where the linear propagator $S(t)$ is defined on $\mathbb R$ by
\[S(t)f(x)=e^{-t\phi(\partial_x)}f(x)=\frac{1}{\sqrt{2\pi}}\int_{\mathbb R} e^{ix\xi}e^{-it\frac{\xi}{1+\xi^2}}\widehat f(\xi)\,d\xi,\]
and analogously on $\mathbb T$ by Fourier series. Here we define
\begin{equation}\label{eq:diff.oper}
\phi(\partial_x):=(1-\partial_x^2)^{-1}\partial_x.
\end{equation}
On $\mathbb R$ and $\mathbb T$, the operator can be understood as a Fourier multiplier with symbol
\[\phi(\xi)=\frac{i\xi}{1+\xi^2}, \qquad \xi\in\mathbb R,\]
and
\[\phi(n)=\frac{in}{1+n^2}, \qquad n\in\mathbb Z,\]
respectively. We denote by $\widehat f$ the Fourier transform or Fourier coefficient of $f$, depending on the underlying spatial domain. The precise conventions are introduced below.

\medskip

The well-known notion of well-posedness was initially introduced by Hadamard \cite{Hadamard}. The precise statement is as follows:

\begin{definition}[Local well-posedness]\label{def:WP}
Let $u_0 \in H^s(\mathcal M)$ be given. We say that the Cauchy problem for \eqref{eq:gBBM} is locally well-posed in $H^s(\mathcal M)$ if the following conditions are satisfied:
\begin{enumerate}
\item \emph{(Existence)} There exist a time $T=T(\|u_0\|_{H^s(\mathcal M)})>0$ and a subset $X_T^s(\mathcal M)$ of $C([0,T];H^s(\mathcal M))$ such that a solution $u$ to \eqref{eq:gBBM} exists in $X_T^s(\mathcal M)$.

\medskip

\item \emph{(Uniqueness)} The solution is unique in $X_T^s(\mathcal M)$.

\medskip

\item \emph{(Continuous dependence on the data)} The map $u_0\mapsto u$ is continuous from a ball $B\subset H^s(\mathcal M)$ to $X_T^s(\mathcal M)$, where $B$ is endowed with the $H^s(\mathcal M)$ topology.
\end{enumerate}
\end{definition}

\begin{remark}
We say that the Cauchy problem is \emph{unconditionally} well-posed when taking $X_T^s(\mathcal M)=C([0,T];H^s(\mathcal M))$ in Definition \ref{def:WP}.
\end{remark}

\begin{remark}
We say that the Cauchy problem is \emph{globally} well-posed if one can take $T>0$ arbitrarily large.
\end{remark}

\begin{remark}
We say that the Cauchy problem is ill-posed if at least one condition in Definition \ref{def:WP} fails.
\end{remark}

\begin{remark}
In this paper, we say that the Cauchy problem is \emph{weakly ill-posed} if the flow map fails to be analytic.
\end{remark}

We now state the main results of this paper.

\begin{theorem}\label{thm:unconditional_well_posedness}
The gBBM equation \eqref{eq:gBBM} is locally unconditionally well-posed in $H^s(\mathcal M)$ for $s\ge \frac{p-2}{2p}$.
\end{theorem}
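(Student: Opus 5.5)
The strategy is a contraction mapping argument directly in $C([0,T];H^s(\mathcal M))$ applied to the Duhamel formulation \eqref{eq:Duhamel}. Since $S(t)$ is unitary on every $H^s$ and $\phi(\partial_x)$ has symbol of size $\langle\xi\rangle^{-1}$, it gains one derivative. So everything reduces to the multilinear estimate
\begin{equation*}
\|\phi(\partial_x)(u_1u_2\cdots u_p)\|_{H^s}\lesssim \prod_{j=1}^p\|u_j\|_{H^s},\qquad s\ge \tfrac{p-2}{2p},
\end{equation*}
which I read as $\|u_1\cdots u_p\|_{H^{s-1}}\lesssim\prod_j\|u_j\|_{H^s}$. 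Once this holds, the map $\Gamma u=S(t)u_0-\int_0^tS(t-t')\phi(\partial_x)(u^p)\,dt'$ satisfies $\|\Gamma u\|_{L^\infty_TH^s}\le \|u_0\|_{H^s}+CT\|u\|^p_{L^\infty_TH^s}$, and the differences obey an analogous bound via $u^p-v^p=(u-v)\sum_k u^kv^{p-1-k}$. Hence $\Gamma$ is a contraction on a ball of radius $2\|u_0\|_{H^s}$ when $T\sim \|u_0\|_{H^s}^{-(p-1)}$. Continuity of $\Gamma u$ in time follows from the strong continuity of $S(t)$ together with the integral being Lipschitz in $t$. Lipschitz (indeed analytic) dependence on the data follows in the standard way.

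To prove the multilinear estimate, I would argue by duality. It is equivalent to
\begin{equation*}
\Big|\int u_1\cdots u_p\, w\Big|\lesssim \prod_j\|u_j\|_{H^s}\,\|w\|_{H^{1-s}}.
\end{equation*}
I would prove this with Hölder and Sobolev embedding, which work identically on $\mathbb R$ and $\mathbb T$ (on $\mathbb T$, for nonnegative regularity and no mean-zero issues). Put each $u_j\in H^s\subset L^{q}$ with $\frac1q=\frac12-s$; this is valid since $s<\frac12$. Put $w\in H^{1-s}\subset L^{r}$ with $\frac1r=\max(\frac12-(1-s),0)=0$, that is, $w\in L^\infty$; this requires $1-s>\frac12$, which holds. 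Hölder then requires $p(\frac12-s)\le 1$, i.e.\ $s\ge \frac{p-2}{2p}$, exactly the threshold. At equality, $\frac1q=\frac1p$ and the embedding $H^{1/2-1/p}\subset L^p$ is fine because $p<\infty$. On $\mathbb T$, larger $s$ is handled by inclusion. For the endpoint one needs to check $r$: $1-s>\frac12$ gives $H^{1-s}\subset L^\infty$ strictly, so no endpoint failure arises there. The case $p=2$, $s=0$ is just $\|u_1u_2\|_{L^1}\cdot\|w\|_{L^\infty}$. Because the argument uses only $L^\infty_tH^s$ norms, it also yields the unconditional statement: any two solutions in $C([0,T];H^s)$ satisfy the difference estimate on a short interval, and iterating gives uniqueness without auxiliary spaces.

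The main obstacle, I expect, is not the estimate itself but justifying that an arbitrary solution in $C([0,T];H^s)$, understood distributionally, satisfies the Duhamel formula, so that the difference estimate applies. Here I would use the multilinear estimate to see that $u^p\in L^\infty_TH^{s-1}$ is a well-defined distribution (and $\phi(\partial_x)u^p\in L^\infty_TH^s$). Then $\partial_t(S(-t)u)=-S(-t)\phi(\partial_x)u^p$ holds in $\mathcal D'$ with the right-hand side in $L^\infty_TH^s$, and integrating in time gives \eqref{eq:Duhamel}. The remaining point needing care is the difference estimate at the endpoint $s=\frac{p-2}{2p}$ with constants uniform in the solutions. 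This is fine since Hölder is multilinear, so $\|u-v\|_{L^\infty_TH^s}\le CT(\|u\|+\|v\|)^{p-1}\|u-v\|$, and uniqueness propagates over $[0,T]$ by a continuity argument.
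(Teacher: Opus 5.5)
\textbf{The gap: the range $s\ge\frac12$.} For $\frac{p-2}{2p}\le s<\frac12$ your argument is the paper's argument in dual form. Pairing against $w\in H^{1-s}\subset L^\infty$ is exactly the dual of the paper's bound $\|f\|_{H^{s-1}}\lesssim\|f\|_{L^1}$, which holds because $2s-2<-1$. It is then followed by H\"older and $H^s\subset L^p$. Your contraction in $C([0,T];H^s)$ and your derivation of Duhamel's formula for arbitrary $C([0,T];H^s)$ solutions also match the paper.

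The theorem, however, asserts well-posedness for \emph{all} $s\ge\frac{p-2}{2p}$, and your multilinear estimate does not reach $s\ge\frac12$. The embedding $H^{1-s}\subset L^\infty$ needs $s<\frac12$, and for $s>1$ the dual space $H^{1-s}$ has negative regularity, so no H\"older argument of this type applies. Your remark that ``on $\mathbb T$, larger $s$ is handled by inclusion'' does not work. The left-hand side $\|\phi(\partial_x)(u_1\cdots u_p)\|_{H^s}$ increases with $s$, so the estimate at higher $s$ is stronger, not weaker. The inclusion $C([0,T];H^s)\subset C([0,T];H^{s_0})$ does give uniqueness. But it only produces a solution in $C([0,T];H^{s_0})$. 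Showing that this solution stays in $H^s$ and depends continuously on the data in the $H^s$ topology requires an $H^s$-level estimate, or a persistence-of-regularity argument, which again requires one. On $\mathbb R$ you say nothing about this range at all.

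\textbf{How to close it.} The paper fills the range directly:
\begin{itemize}
\item For $s>\frac12$: $H^s$ is an algebra and $\phi(\partial_x)$ is bounded on $H^s$.
\item At $s=\frac12$: $\sup_\xi\langle\xi\rangle^{1/2}|\phi(\xi)|<\infty$ gives $\|\phi(\partial_x)f\|_{H^{1/2}}\lesssim\|f\|_{L^2}$. Then apply H\"older with $H^{1/2}\subset L^{2p}$.
\end{itemize}
Your duality scheme can also be pushed to $\frac12\le s\le 1$ by taking $w\in H^{1-s}\subset L^2$ and $u_j\in H^s\subset L^{2p}$. For $s>1$ you still need the algebra property or a fractional Leibniz rule.

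\textbf{A smaller correction.} On $\mathbb R$, H\"older requires the exponents to sum exactly to one. So when $s>\frac{p-2}{2p}$ you cannot place $u_j$ in $L^q$ with $\frac1q=\frac12-s$ and $w$ in $L^\infty$, since then $\frac pq<1$. Simply use $H^s\subset L^p$ directly, which is valid because $\frac12-s\le\frac1p\le\frac12$.
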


\begin{remark}
Local well-posedness for the generalized BBM equation was already established by Bona and Chen \cite{BC2003} in the spaces $L^2\cap L^q$ with $q\ge p$. On the other hand, Theorem \ref{thm:unconditional_well_posedness} is formulated in the Sobolev scale. Note that the regularity threshold appearing in Theorem \ref{thm:unconditional_well_posedness},
\[s\ge \frac{p-2}{2p}=\frac12-\frac1p,\]
exactly corresponds to the natural $L^p$-integrability scale via the Sobolev embedding $H^s(\mathcal M)\hookrightarrow L^p(\mathcal M)$ in one dimension. In this sense, these two local results lie on the same scaling line, while Theorem \ref{thm:unconditional_well_posedness} provides a sharp unconditional result in the Sobolev framework.
\end{remark}

A key ingredient in the proof of Theorem \ref{thm:unconditional_well_posedness} is a multilinear estimate in $H^s(\mathcal M)$. A precise statement is as follows:

\begin{proposition}\label{prop:Hs_multilinear}
Let $p \ge 2$ be an integer. Then, for every $s \ge \frac{p-2}{2p}$, one has
\begin{equation}\label{eq:Hs_multilinear}
\|\phi(\partial_x)(u_1u_2\cdots u_p)\|_{H^s(\mathcal M)} \lesssim \prod_{j=1}^p \|u_j\|_{H^s(\mathcal M)},
\end{equation}
where $\mathcal M=\mathbb R$ or $\mathbb T$, and $\phi(\partial_x)$ is defined as in \eqref{eq:diff.oper}. Moreover, this result is sharp in the sense that \eqref{eq:Hs_multilinear} fails whenever $s<\frac{p-2}{2p}$.
\end{proposition}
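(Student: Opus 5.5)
The plan is to exploit the smoothing of $\phi(\partial_x)$: its symbol satisfies $|\phi(\xi)|\lesssim \langle\xi\rangle^{-1}$, so
\[
\|\phi(\partial_x)F\|_{H^s}\lesssim \|F\|_{H^{s-1}}.
\]
It therefore suffices to bound $\|u_1\cdots u_p\|_{H^{s-1}}$. Since $s-1<0$, I would argue by duality. With $s_0=\frac{p-2}{2p}$, it is enough to treat $s=s_0$ when $s\le 1$, or more precisely any $s\in[s_0,1]$, together with the easy large-$s$ case.

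For $s_0\le s\le 1$, pair $u_1\cdots u_p$ with $g\in H^{1-s}$ and apply Hölder with all $u_j\in L^p$, so that $g$ must lie in $L^\infty$. This fails, because $H^{1-s}\hookrightarrow L^\infty$ requires $1-s>\frac12$, i.e.\ $s<\frac12$. I would therefore distribute exponents differently. Put $u_j\in L^{q}$ with $H^s\hookrightarrow L^q$, where $\frac1q=\frac12-s$ (or $q=\infty$-type if $s>\frac12$, with the usual endpoint care). Put $g\in L^r$ with $H^{1-s}\hookrightarrow L^r$, where $\frac1r=\frac12-(1-s)=s-\frac12$ if $s>\frac12$, and any $r<\infty$ otherwise. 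Hölder needs
\[
\frac pq+\frac1r=1.
\]
For $s\le\frac12$, take $r=\infty$-ish, which requires $\frac pq\le 1$, i.e.\ $\frac12-s\le\frac1p$, i.e.\ $s\ge s_0$. Since the embedding $H^{1-s}\hookrightarrow L^\infty$ is valid when $1-s>\frac12$, this case is fine. At $s=\frac12$ exactly, I would use $u_j\in L^{q}$ for large finite $q$ and $g\in L^{r}$ with $r$ large but finite, both of which are valid embeddings. For $s>\frac12$, $H^s$ is an algebra and embeds in $L^\infty$, and the estimate $\|u_1\cdots u_p\|_{H^s}\lesssim\prod\|u_j\|_{H^s}$ gives the result directly. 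All of this works identically on $\mathbb R$ and $\mathbb T$ via the Sobolev embeddings on each. The endpoint $s=s_0<\frac12$ corresponds to $q=p$ exactly, which is the sharp Sobolev embedding $H^{s_0}\hookrightarrow L^p$; that embedding holds since $p<\infty$.

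For sharpness, I expect the main work to be choosing the counterexample; it should be a high-to-low interaction. On $\mathbb R$, take all $u_j=f_N$ with $\widehat f_N=\mathbf 1_{[N,N+1]}$ for half of the factors and $\mathbf 1_{[-N-1,-N]}$ for the others when $p$ is even; when $p$ is odd, adjust one factor to a unit low-frequency bump. This gives $\|u_j\|_{H^s}\sim N^{s}$. The product then has a piece at frequencies $|\xi|\lesssim 1$ of size $\gtrsim 1$, since it is a $p$-fold convolution of unit intervals. The low-frequency output with $\phi$ gives no gain, however, so this needs refinement.

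Better is to use wide bumps: $\widehat{u_j}=N^{-1/2}\mathbf 1_{[N,2N]}$ (signs alternating as above), so that $u_j$ is concentrated on an interval of length $N^{-1}$ with height $N^{1/2}$, and $\|u_j\|_{H^s}\sim N^s$. Then $u_1\cdots u_p$ has height $N^{p/2}$ on a set of length $N^{-1}$. Its Fourier transform is $\sim N^{p/2-1}$ on frequencies $|\xi|\lesssim N$, including the region $|\xi|\sim N$ after the $\pm$ cancellation. Hence
\[
\|\phi(\partial_x)(u_1\cdots u_p)\|_{H^s}\gtrsim N^{p/2-1}\,N^{s-1}\,N^{1/2}.
\]
Comparing with $N^{ps}$ gives $\frac{p-1}2-1+s\le ps$, i.e.\ $s\ge\frac{p-3}{2(p-1)}$. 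This differs from $s_0$, so the frequency window needs tuning. I would optimize by taking the output at $|\xi|\sim N$ but measured against the precise mass. The step I expect to be hardest is making the counterexample match $s_0$ exactly, possibly using the duality structure: the failure of $\int u^p g$ with $g\in H^{1-s}$ a bump at frequency $\sim N$. On $\mathbb T$ I would use the analogous Dirichlet-kernel-type functions $\sum_{|n|\le N}e^{inx}$.
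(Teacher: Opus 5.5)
Your upper bound is correct and is the paper's argument in dual form. Pairing $u_1\cdots u_p$ against $g\in H^{1-s}\hookrightarrow L^\infty$ (for $s<\frac12$) is exactly the paper's bound $\|f\|_{H^{s-1}}\lesssim\|f\|_{L^1}$, valid since $2s-2<-1$, followed by H\"older and $H^s\hookrightarrow L^p$. For $s>\frac12$ you both use the algebra property. There is one slip at $s=\frac12$: on $\mathbb R$, H\"older needs $\frac pq+\frac1r=1$ exactly, so you cannot take both $q$ and $r$ large. Take $r=2$, $q=2p$; this is what the paper does via $\|\phi(\partial_x)f\|_{H^{1/2}}\lesssim\|f\|_{L^2}$ and $H^{1/2}\hookrightarrow L^{2p}$.

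The genuine gap is sharpness, which you leave unfinished. Your computation only gives the necessary condition $s\ge\frac{p-3}{2(p-1)}$, which is strictly below $\frac{p-2}{2p}$.

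\textbf{Where to measure.} Your width-$N$ inputs are essentially the paper's; the problem is where you measure the output. You dismissed low output frequencies because of the unit-width example, where the $p$-fold convolution near $\xi=0$ is only $O(1)$. With width-$N$ bumps whose frequencies can sum to zero, $\widehat U(\xi)$ for $|\xi|\le1$ is $N^{-p/2}$ times the $(p-1)$-dimensional volume of the resonant set $\{\xi_1+\cdots+\xi_p=\xi\}$ inside the supports. That is $\sim N^{p/2-1}$ in your normalization, as you yourself assert. On $|\xi|\le1$ the weight $\langle\xi\rangle^s|\phi(\xi)|\sim|\xi|$ costs no power of $N$. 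Hence $\|\phi(\partial_x)U\|_{H^s}\gtrsim N^{p/2-1}$, and comparing with $N^{ps}$ gives exactly $s\ge\frac{p-2}{2p}$. The region $|\xi|\sim N$ you used is smaller than this by the factor $N^{s-\frac12}$, which is small since $s<\frac12$.

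\textbf{Odd $p$.} Your alternating-sign scheme does not balance when $p$ is odd. Replacing one factor by a unit low-frequency bump loses a power of $N$: it yields $\|\phi(\partial_x)U\|_{H^s}\sim N^{(p-3)/2}$ against $N^{(p-1)s}$, again only $s\ge\frac{p-3}{2(p-1)}$. The paper instead puts $p-1$ blocks on $[N,2N]$ and one on $[-2(p-1)N,-(p-1)N]$, which works for every $p$. It proves $\Psi(0)>0$ by exhibiting a box of volume $2^{-(p-1)}$ inside the resonant set.

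\textbf{The torus.} Since $\phi(0)=0$, you must target a nonzero mode. The paper shifts the last block by $1$ and counts at least $(M+1)^{p-1}\sim N^{p-1}$ solutions of $n_1+\cdots+n_p=1$, which gives $|\widehat U(1)|\gtrsim N^{p-1}$ against $\prod_j\|u_j\|_{H^s}\sim N^{p(s+\frac12)}$.
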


The threshold in Theorem \ref{thm:unconditional_well_posedness} is also natural from the viewpoint of distributional solutions, since it is precisely the regularity at which the nonlinear term is well defined in the weak formulation. To make this precise, we first formulate the notion of a distributional solution to the generalized BBM equation \eqref{eq:gBBM}.

\begin{definition}
Let $T>0$ and $s\ge \frac{p-2}{2p}$. A function $u\in L^\infty(0,T;H^s(\mathcal M))$ is called a \emph{distributional solution} to \eqref{eq:gBBM} with initial data $u_0$ if, for every test function $\varphi\in C_c^\infty((-T,T)\times\mathcal M)$, one has
\[\int_0^T\int_{\mathcal M}\Big((\varphi_t+(1-\partial_x^2)^{-1}\partial_x\varphi)\,u+(1-\partial_x^2)^{-1}\partial_x\varphi\cdot u^p\Big)\,dx\,dt+\int_{\mathcal M}\varphi(0,x)u_0(x)\,dx=0.\]
The initial condition is understood in the sense that $u(t)\to u_0$ weakly in $H^s(\mathcal M)$ as $t\to 0^+$.
\end{definition}

\begin{remark}
If $u\in L^\infty(0,T;H^s(\mathcal M))$ with $s\ge \frac{p-2}{2p}$, then, by the Sobolev embedding $H^s(\mathcal M)\hookrightarrow L^p(\mathcal M)$,
\[u^p\in L^\infty(0,T;L^1(\mathcal M))\subset L^1(0,T;L^1(\mathcal M)).\]
Hence the nonlinear term in the weak formulation is well defined. We emphasize that this is precisely the same $L^1$-mechanism used in the proof of the multilinear estimate, where the key step is to control $\|u^p\|_{L^1}$ via the embedding $H^s(\mathcal M)\hookrightarrow L^p(\mathcal M)$.
\end{remark}

\begin{remark}
The notion of unconditional well-posedness, or well-posedness with unconditional uniqueness, goes back to Kato \cite{Kato1995} (see also \cite{GKO2013,Kishimoto2019,MT2022} and the references therein).
\end{remark}

\begin{remark}
If the local well-posedness theorem is proved in $C([0,T];H^s(\mathcal M))$ for some $s$ above this distributional threshold, then unconditional uniqueness means the following: whenever
\[u,v\in L^\infty((0,T);H^s(\mathcal M))\]
are distributional solutions to \eqref{eq:gBBM} with the same initial data, one has $u=v$ on $[0,T]$. In particular, no auxiliary regularity assumptions or additional restrictions on the solution class are imposed.
\end{remark}

We now give a rigorous proof of the unconditional uniqueness statement, using the multilinear estimate established in Proposition \ref{prop:Hs_multilinear}.

\begin{corollary}[Unconditional uniqueness in $L^\infty((0,T);H^s(\mathcal M))$]\label{cor:unconditional_uniqueness}
Let $s\ge \frac{p-2}{2p}$, and suppose that $u,v\in L^\infty((0,T);H^s(\mathcal M))$ are two distributional solutions to the gBBM equation \eqref{eq:gBBM} with the same initial data $u_0$. Then $u=v$ on $[0,T]$.
\end{corollary}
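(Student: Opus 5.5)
Our plan is to show first that any distributional solution in $L^\infty((0,T);H^s(\mathcal M))$ satisfies the Duhamel formula \eqref{eq:Duhamel} for a.e.\ $t\in[0,T]$. Then we run a contraction-type difference estimate using Proposition \ref{prop:Hs_multilinear} on short time intervals and iterate.

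\emph{Step 1: from the weak formulation to Duhamel.} Let $u\in L^\infty((0,T);H^s)$ be a distributional solution. By the remark preceding the corollary, $u^p\in L^\infty(0,T;L^1)$. By Proposition \ref{prop:Hs_multilinear}, $F:=\phi(\partial_x)(u^p)\in L^\infty(0,T;H^s)$, with $\|F\|_{L^\infty H^s}\lesssim\|u\|_{L^\infty H^s}^p$. The weak formulation then says that, in $\mathcal D'((0,T)\times\mathcal M)$,
\[u_t+\phi(\partial_x)u+F=0.\]
Taking the spatial Fourier transform (or coefficients on $\mathbb T$), we get for each frequency $\xi$
\[\partial_t\widehat u(t,\xi)+\phi(\xi)\widehat u(t,\xi)=-\widehat F(t,\xi)\]
in $\mathcal D'(0,T)$. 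Here the right-hand side lies in $L^\infty_t$, so $\widehat u(\cdot,\xi)$ is Lipschitz in $t$ after modification on a null set. Multiplying by the integrating factor $e^{t\phi(\xi)}$, which is unimodular since $\phi$ is purely imaginary, and integrating from $0$ gives
\[\widehat u(t,\xi)=e^{-t\phi(\xi)}\widehat u_0(\xi)-\int_0^t e^{-(t-t')\phi(\xi)}\widehat F(t',\xi)\,dt'.\]
The test function supported up to $t=0$ in the weak formulation, together with the weak convergence $u(t)\rightharpoonup u_0$, fixes the constant. To do this rigorously we will test with $\varphi(t,x)=\chi(t)\psi(x)$ and let $\chi$ approximate $\mathbf 1_{[0,t]}$. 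As a consequence, $u\in C([0,T];H^s)$ and \eqref{eq:Duhamel} holds for all $t$.

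\emph{Step 2: difference estimate.} Set $w=u-v$ and $M=\max(\|u\|_{L^\infty H^s},\|v\|_{L^\infty H^s})$. Expanding
\[u^p-v^p=w\sum_{k=0}^{p-1}u^kv^{p-1-k},\]
and using that $S(t)$ is unitary on $H^s$, Proposition \ref{prop:Hs_multilinear} (applied to the multilinear form with one factor $w$) gives, for $0\le t\le\delta$,
\[\|w(t)\|_{H^s}\le\int_0^t\|\phi(\partial_x)(u^p-v^p)(t')\|_{H^s}\,dt'\le C p M^{p-1}\delta\sup_{[0,\delta]}\|w\|_{H^s}.\]
Choosing $\delta$ with $CpM^{p-1}\delta<1$ forces $w\equiv0$ on $[0,\delta]$. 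The same argument on $[\delta,2\delta]$, and so on, uses Duhamel restarted at $t=k\delta$. This is valid since both solutions are continuous in time by Step 1 and agree at $k\delta$. The step $\delta$ depends only on $M$, so finitely many steps cover $[0,T]$. Alternatively, one can apply Gr\"onwall directly to $\|w(t)\|_{H^s}\le C'\int_0^t\|w\|_{H^s}$.

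The main obstacle is Step 1: justifying rigorously that a merely $L^\infty_tH^s$ distributional solution satisfies the Duhamel formula with the correct initial trace. The key point is that Proposition \ref{prop:Hs_multilinear} places the nonlinearity $\phi(\partial_x)(u^p)$ in $L^\infty_tH^s$, not merely in $\mathcal D'$. This makes the frequency-wise ODE argument legitimate without any auxiliary spaces. The difference estimate itself is routine once this is in place.
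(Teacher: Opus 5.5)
Your proof is correct. It reaches the same Gr\"onwall conclusion as the paper, but justifies the time-integrated identity differently.

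The paper never proves the Duhamel formula for $u$ or $v$ separately. It works only with $w=u-v$ and proceeds as follows:
\begin{enumerate}
\item It mollifies, so that $\partial_t(J_Nw)\in L^1(0,T;H^s)$.
\item It uses the Bochner--Sobolev embedding $W^{1,1}(0,T;H^s)\subset C([0,T];H^s)$ together with the weak initial trace to get $J_Nw(0)=0$.
\item It integrates $w_t=-\phi(\partial_x)(w+u^p-v^p)$ directly in time. No propagator appears: the linear term is absorbed into the Gr\"onwall weight, since $\phi(\partial_x)$ is bounded on $H^s$.
\item It lets $N\to\infty$, applies Gr\"onwall, and upgrades ``$w=0$ a.e.'' to ``$w\equiv 0$'' using $w_t\in L^1(0,T;H^s)$.
\end{enumerate}

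You instead show that each distributional solution is individually a mild solution in $C([0,T];H^s)$. You replace the vector-valued Sobolev theory by a scalar ODE in each frequency with the unimodular integrating factor $e^{t\phi(\xi)}$, and then reuse the contraction estimate from the local theory.

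Your route gives a slightly stronger structural statement: every $L^\infty_tH^s$ distributional solution coincides with the fixed-point solution of Theorem~\ref{thm:unconditional_well_posedness}, which is the real content of unconditional well-posedness. The paper's route is shorter and avoids the propagator entirely. Both rest on the same input: Proposition~\ref{prop:Hs_multilinear} places $\phi(\partial_x)(u^p)$ in $L^\infty_tH^s$.

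Three points to tighten on $\mathbb R$:
\begin{enumerate}
\item ``For each frequency $\xi$'' must be read as ``for a.e.\ $\xi$''. You pass from the distributional identity in $(t,\xi)$ to a.e.\ $\xi$ by Fubini and a countable dense family of time test functions.
\item $\widehat F(\cdot,\xi)\in L^\infty_t$ does not follow from $F\in L^\infty_tH^s$. It follows from the pointwise bound $|\widehat F(t,\xi)|\le(2\pi)^{-1/2}|\phi(\xi)|\,\|u(t)\|_{L^p}^p$.
\item The fact that makes the integrating factor legitimate is $\widehat u(\cdot,\xi)\in L^1(0,T)$ for a.e.\ $\xi$.
\end{enumerate}
Alternatively, the testing procedure you already mention ($\varphi=\chi(t)\psi(x)$ with $\chi\to\mathbf 1_{[0,t]}$) directly gives $u(t)=u_0-\int_0^t(\phi(\partial_x)u+F)(t')\,dt'$ in $H^s$ for a.e.\ $t$. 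That sidesteps the frequency-wise bookkeeping altogether.
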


\begin{proof}
Let $w:=u-v$. Since $u$ and $v$ are distributional solutions with the same initial data $u_0$, $w$ satisfies
\[ w_t=-\phi(\partial_x)\left(w+u^p-v^p\right) \]
in the sense of distributions on $(0,T) \times \mathcal M$. Moreover, by the definition of distributional solution,
\[w(t) \rightharpoonup 0 \qquad \text{weakly in } H^s(\mathcal M)\quad\text{as } t\to 0^+.\]
Note that
\[ u^p-v^p=w\sum_{k=0}^{p-1}u^{p-1-k}v^k.\]
Using this, we rewrite the equation above as
\begin{equation}\label{eq:w_eq}
w_t=-\phi(\partial_x)\left(w+w\sum_{k=0}^{p-1}u^{p-1-k}v^k\right).
\end{equation}

Let $J_N$ be a standard Friedrichs mollifier on $\mathcal M$, namely, on $\mathbb R$ we take convolution with a smooth approximate identity, while on $\mathbb T$ we take a standard Fourier truncation. In either case, $J_N$ is bounded on $H^s(\mathcal M)$, commutes with Fourier multipliers, and $J_Nf\to f$ strongly in $H^s(\mathcal M)$ as $N\to\infty$.

Applying $J_N$ to \eqref{eq:w_eq}, we obtain
\[\partial_t(J_Nw) = -\phi(\partial_x)\left(J_Nw+J_N\left(w\sum_{k=0}^{p-1}u^{p-1-k}v^k\right)\right)\]
in the sense of distributions in time with values in $H^s(\mathcal M)$. By Proposition \ref{prop:Hs_multilinear}, the right-hand side belongs to $L^1(0,T;H^s(\mathcal M))$. Hence, by the standard theory of Banach-valued weak derivatives and Bochner--Sobolev spaces (see, for instance, \cite[Chapter III]{Showalter1997} and \cite[Section 6.A]{HunterPDE}), we have
\begin{equation}\label{eq:theory of Banach-valued weak derivatives}
J_Nw\in W^{1,1}(0,T;H^s(\mathcal M))\subset C([0,T];H^s(\mathcal M)).
\end{equation}
Since $w(t)\to 0$ weakly in $H^s(\mathcal M)$ as $t \to 0^+$ and $J_N$ is bounded on $H^s(\mathcal M)$, we have
\[J_Nw(t) \to 0 \quad \text{weakly in } H^s(\mathcal M)\quad \text{as } t \to 0^+.\]
In fact, since $J_Nw\in C([0,T];H^s(\mathcal M))$, we obtain $J_Nw(t)\to J_Nw(0)$ strongly in $H^s(\mathcal M)$ as $t \to 0^+$, which implies $J_Nw(0)=0$. Thus, integrating in time, one obtains for every $t\in[0,T]$ that
\[J_Nw(t) = -\int_0^t \phi(\partial_x)\left(J_Nw(t')+J_N\left(w(t')\sum_{k=0}^{p-1}u(t')^{p-1-k}v(t')^k\right)\right)\,dt'.\]
Taking the $H^s$-norm, using the boundedness of $J_N$ and $\phi(\partial_x)$ on $H^s(\mathcal M)$, and Proposition \ref{prop:Hs_multilinear}, we obtain
\[\begin{aligned}
\|J_Nw(t)\|_{H^s} \lesssim&~{} \int_0^t \|J_Nw(t')\|_{H^s}\,dt' + \sum_{k=0}^{p-1}\int_0^t \|\phi(\partial_x)J_N(wu^{p-1-k}v^k)(t')\|_{H^s}\,dt' \\
\lesssim&~{} \int_0^t \|w(t')\|_{H^s}\,dt' + \sum_{k=0}^{p-1}\int_0^t \|\phi(\partial_x)(wu^{p-1-k}v^k)(t')\|_{H^s}\,dt' \\
\lesssim&~{} \int_0^t \left(1+\sum_{k=0}^{p-1}\|u(t')\|_{H^s}^{\,p-1-k}\|v(t')\|_{H^s}^{\,k}\right)\|w(t')\|_{H^s}\,dt'.
\end{aligned}\]
Recall that $u,v,w \in L^\infty((0,T);H^s(\mathcal M))$. Thus, since $J_Nw(t)\to w(t)$ strongly in $H^s(\mathcal M)$ for a.e. $t\in(0,T)$, it follows, as $N \to \infty$, that
\[\|w(t)\|_{H^s} \lesssim \int_0^t \left(1+\sum_{k=0}^{p-1}\|u(t')\|_{H^s}^{\,p-1-k}\|v(t')\|_{H^s}^{\,k}\right)\|w(t')\|_{H^s}\,dt'\]
for a.e. $t\in(0,T)$. Define
\[F(t):=\|w(t)\|_{H^s}, \qquad G(t):=1+\sum_{k=0}^{p-1}\|u(t)\|_{H^s}^{\,p-1-k}\|v(t)\|_{H^s}^{\,k}.\]
Then $F\in L^\infty(0,T)$, $G\in L^1(0,T)$, and
\[F(t)\lesssim \int_0^t G(t')F(t')\,dt'\]
for a.e. $t\in(0,T)$. By Gr\"onwall's inequality, we conclude that $F(t)=0$ for a.e. $t\in(0,T)$, equivalently,
\[w(t)=0 \qquad \text{for a.e. } t\in(0,T).\]

Finally, \eqref{eq:w_eq} and Proposition \ref{prop:Hs_multilinear} imply that
\[w_t\in L^1(0,T;H^s(\mathcal M)).\]
Therefore, by the same argument as in \eqref{eq:theory of Banach-valued weak derivatives}, we have
\[w \in C([0,T];H^s(\mathcal M)),\]
which, together with $w=0$ for a.e. $t$, implies that $w\equiv 0$ on $[0,T]$, that is, $u=v$ on $[0,T]$.
\end{proof}

Theorem \ref{thm:unconditional_well_posedness} identifies the sharp regularity threshold for the local theory from the perspective of the multilinear estimate and the weak formulation. Our next result shows that below this threshold we indeed encounter an obstruction at the level of the smoothness of the flow map. More precisely, if the flow map is assumed to be well-defined and continuous near the origin, then it cannot be of class $C^p$ below $s=\frac{p-2}{2p}$.

\begin{theorem}\label{thm:weak_ill_posedness}
Let $p \ge 2$ be an integer. For any $s < \frac{p-2}{2p}$, if the flow map $\Phi:u_0\mapsto u$ for the generalized BBM equation \eqref{eq:gBBM} is well-defined and continuous in a neighborhood of the origin as a map from $H^s(\mathcal M)$ to $C([0,T];H^s(\mathcal M))$, then it cannot be of class $C^p$ at the origin.
\end{theorem}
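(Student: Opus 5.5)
The plan is the standard Bourgain--Tzvetkov argument: we show that the $p$-th Fréchet derivative of the flow map at the origin is unbounded from $H^s$ to $C([0,T];H^s)$ whenever $s<\frac{p-2}{2p}$. Take data $u_0=\delta\varphi$ with $\delta$ small and $\varphi\in H^s$. Expand $u=\Phi(\delta\varphi)$ in powers of $\delta$ using the Duhamel formula \eqref{eq:Duhamel}. The first-order term is $u_1(t)=S(t)\varphi$. Since the nonlinearity is the pure power $u^p$, the derivatives of orders $2,\dots,p-1$ at $0$ vanish. The $p$-th derivative is, up to the factor $p!$,
\[u_p(t)=-\int_0^t S(t-t')\phi(\partial_x)\big((S(t')\varphi)^p\big)\,dt'.\]
If $\Phi$ were $C^p$ at the origin, then $\frac{\partial^p}{\partial\delta^p}\Phi(\delta\varphi)|_{\delta=0}$ would be a bounded $p$-linear map. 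This gives the bound
\[\sup_{t\in[0,T]}\|u_p(t)\|_{H^s}\lesssim\|\varphi\|_{H^s}^p .\]
The task is therefore to construct $\varphi_N$ with $\|\varphi_N\|_{H^s}\sim1$ for which $\|u_p(t)\|_{H^s}\to\infty$ as $N\to\infty$, for some fixed $t\in(0,T]$.

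We use a high-to-low frequency interaction, mirroring the sharpness part of Proposition \ref{prop:Hs_multilinear}.

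\emph{Construction on $\mathbb R$.} Take $\widehat{\varphi_N}=N^{-s}\gamma^{-1/2}\mathbbm 1_{[N,N+\gamma]}$, together with its reflection to keep $\varphi_N$ real. Here $\gamma=\gamma(N)$ is a width to be chosen, for instance $\gamma\sim1$ or $\gamma\sim N^{-\epsilon}$. Then $\|\varphi_N\|_{H^s}\sim1$.

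\emph{Construction on $\mathbb T$.} Take $\varphi_N=N^{-s}\cos(Nx)$. In this case, $p$ even, the product $(S(t)\varphi_N)^p$ contains the zero mode, but $\phi(0)=0$ kills it. So we instead use a packet of $\sim K$ adjacent modes $\sum_{N\le n\le N+K}\cos(nx)$, normalized by $N^{-s}K^{-1/2}$, or a combination of modes $N$ and $N+1$, so that output frequencies $1\le|\xi|\lesssim K$ appear.

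\emph{Phase analysis.} On the output frequency region $|\xi|\sim1$ (or $\lesssim K$) we have $\phi(\xi)\sim 1$. The phase of $S$ at a frequency $\xi$ near $N$ is $t\xi/(1+\xi^2)=O(t/N)$. Hence the phases of the input waves are essentially frozen for $t\lesssim1$. The resonance function
\[\sum_j \frac{\xi_j}{1+\xi_j^2}-\frac{\xi}{1+\xi^2}\]
is $O(1)$ and does not oscillate wildly. Choosing $t$ small but fixed, or using that the time integral of a nonvanishing bounded phase is $\gtrsim t$, gives
\[|\widehat{u_p}(t,\xi)|\gtrsim t\,\big|\widehat{(\varphi_N)^p}(\xi)\big|\]
on the low output region. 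The signs are controlled because all contributions carry positive weight.

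\emph{Counting.} On $\mathbb R$ with $\gamma\sim1$, $\widehat{\varphi_N^p}(\xi)$ at $|\xi|\lesssim1$ is a $(p-1)$-fold convolution of boxes, of size $\sim N^{-ps}$. This requires $p$ even, or a combination of $\pm$ frequencies: choose $p/2$ factors near $+N$ and $p/2$ near $-N$, or for odd $p$ use a mixture producing low output $\xi\sim1$. We then get $\|u_p(t)\|_{H^s}\gtrsim tN^{-ps}$, which blows up only for $s<0$. This is too weak, so the scaling must be taken sharper.

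\emph{Choosing the scales.} To reach the threshold $\frac{p-2}{2p}$, take a wide packet: $\gamma=N$, that is, $\widehat{\varphi_N}=N^{-s-1/2}\mathbbm 1_{[N,2N]}$ (plus its reflection). The output at $|\xi|\lesssim N$ then has size $\sim N^{-p(s+1/2)}\cdot N^{p-1}$, coming from the convolution volume. We must keep $\phi(\xi)\sim 1/N$ and the phases nonoscillating, which restricts to $t\lesssim1$ since the phase differences are $O(1/N)\cdot$ (number of terms). We also restrict the output to $|\xi|\sim1$, measure $\sim1$, so that $\phi\sim1$. This gives
\[\|u_p(t)\|_{H^s}\gtrsim t\,N^{-ps-p/2+p-1}=t\,N^{p/2-1-ps},\]
which tends to infinity exactly when $s<\frac{p-2}{2p}$.

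On $\mathbb T$ the analogous choice is $\varphi_N=N^{-s-1/2}\sum_{N\le|n|\le2N}e^{inx}$. The number of $p$-tuples summing to $\xi=\pm1$ is $\sim N^{p-1}$, and all coefficients are positive, giving the same exponent. This coincides with the counterexample for sharpness in Proposition \ref{prop:Hs_multilinear}, as expected.

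The main obstacle is showing that the time integral does not produce cancellation. The phases $\sum_j\frac{\xi_j}{1+\xi_j^2}$ with $|\xi_j|\sim N$ are each $O(1/N)$, and their sum has size at most $p/N$. Likewise $\frac{\xi}{1+\xi^2}=O(1)$ at $|\xi|\sim1$. I would handle this by fixing $t=t_0$ small, independent of $N$, so that every phase factor satisfies $\mathrm{Re}\,e^{i\theta}\ge\frac12$. This yields a uniform lower bound $\mathrm{Re}\int_0^{t_0}\ge t_0/2$ for every tuple, so the positive Fourier weights add coherently. Care is needed on $\mathbb T$ for odd $p$, where a zero output mode is absent but $\pm1$ modes are available, and on $\mathbb R$ for reality of the data; both are handled by symmetrization.
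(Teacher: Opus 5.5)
Your framework matches the paper's. You identify the $p$-th order term via Duhamel (as in Lemma \ref{lem:Cp_expansion}), use $\|u_p\|_{C_TH^s}\lesssim\|\varphi\|_{H^s}^p$, and defeat this bound with a high-to-low interaction at a fixed small time, where $|\Theta|\le 1$ gives $\mathrm{Re}\int_0^t e^{-it'\Theta}\,dt'\ge t/2$.

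The gap is in the data you finally commit to: the symmetric packet $\widehat{\varphi_N}=N^{-s-1/2}(\mathbbm 1_{[N,2N]}+\mathbbm 1_{[-2N,-N]})$ on $\mathbb R$, and $N^{-s-1/2}\sum_{N\le|n|\le 2N}e^{inx}$ on $\mathbb T$. If $k$ factors come from $[N,2N]$ and $p-k$ from $[-2N,-N]$, the frequency sum ranges over $[(3k-2p)N,(3k-p)N]$. The origin is an \emph{interior} point of this range only when $p/3<k<2p/3$. For $p=3$ no integer $k$ qualifies, so the origin lies on the boundary of every sumset containing it, and the convolution degenerates there:
\begin{itemize}
\item on $\mathbb R$ one gets $\widehat{\varphi_N^3}(\xi)\sim N^{-3(s+\frac12)}|\xi|^2$ for $|\xi|\le 1$, instead of $N^{-3(s+\frac12)}N^2$;
\item on $\mathbb T$ the only triples with $|n_j|\in[N,2N]$ and $n_1+n_2+n_3=1$ are the permutations of $(N,N,1-2N)$ and $(N,N+1,-2N)$, i.e.\ $O(1)$ of them rather than $\sim N^2$.
\end{itemize}
Your lower bound then collapses to $tN^{-3s-\frac32}$ from $|\xi|\lesssim 1$. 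Moving the output to $|\xi|\sim N$, where $\phi\sim N^{-1}$, gives only $tN^{-2s}$. Either way you contradict the $C^3$ bound only for $s<0$, not on the full range $s<\frac16$. So your claim that the number of $p$-tuples summing to $\pm1$ is $\sim N^{p-1}$ is false precisely at $p=3$; it is correct for $p=2$ and $p\ge 4$. Your remark that for odd $p$ one should ``use a mixture producing low output'' is never turned into an actual construction.

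The missing idea is to place the target frequency in the interior of the sumset, which is what the paper's asymmetric choice does. It takes $p-1$ factors from $[N,2N]$ and one from $[-2(p-1)N,-(p-1)N]$ (on $\mathbb T$, from $\{1-2(p-1)N,\dots,1-(p-1)N\}$). Then $\Psi(0)\ge 2^{-(p-1)}>0$ gives the full $N^{p-1}$ volume near the origin (resp.\ $\gtrsim N^{p-1}$ tuples hitting the mode $1$). The binomial expansion also shows that only the $k=p-1$ term reaches $|\xi|\le 1$.

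If you want real-valued data, you can symmetrize such a configuration, or simply widen your annulus to $\pm[N,aN]$ with $a>2$ (for $p=3$, $\pm[N,3N]$ already works). All Fourier weights are nonnegative, and every contributing tuple has a time integral with real part at least $t/2$. Hence the extra interactions only add, and the rest of your argument goes through unchanged.
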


\begin{remark}
In particular, Theorem \ref{thm:weak_ill_posedness} rules out any local theory obtained by the standard Picard iteration method based on the Duhamel formula \eqref{eq:Duhamel}, as long as the associated flow map takes values in $C([0,T];H^s(\mathcal M))$.
\end{remark}

\begin{remark}
Although the proofs on $\mathbb R$ and $\mathbb T$ are implemented differently, the same threshold $s=\frac{p-2}{2p}$ arises in both settings from a common high-to-low frequency interaction. On $\mathbb R$, this mechanism is detected through concentration on a low-frequency interval, whereas on $\mathbb T$ it is captured by an exact discrete mode interaction.
\end{remark}

Although Theorem \ref{thm:unconditional_well_posedness} yields local well-posedness in $H^s(\mathcal M)$ for $s\ge \frac{p-2}{2p}$, extending these solutions globally in time requires additional \emph{a priori} bounds on the solutions. Below $H^1$, the standard conserved quantities do not directly control the $H^s$-norm, so a direct energy argument is not sufficient to obtain global existence. In the odd-power case, however, we obtain global well-posedness below $H^1$ for $p=3$ and $p=5$ by combining a frequency decomposition argument with the use of a Hamiltonian conservation law below the $H^1$ energy level.

\begin{theorem}\label{thm:GWP}
Assume that $p$ is odd. Then the generalized BBM equation \eqref{eq:gBBM} is globally well-posed in $H^s(\mathcal M)$ in the following cases:
\begin{enumerate}
\item if $p=3$, then for $s\ge \frac14$,
\item if $p=5$, then for $s>\frac12$.
\end{enumerate}
\end{theorem}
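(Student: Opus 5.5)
The plan combines the local theory of Theorem \ref{thm:unconditional_well_posedness} with an a priori bound on $\|u(t)\|_{H^s}$ on arbitrary time intervals. Since the local existence time depends only on $\|u_0\|_{H^s}$, such a bound suffices. We use a Bourgain/Bona--Tzvetkov frequency splitting. Fix $T>0$ and a large parameter $N$, and write $u_0=P_{\le N}u_0+P_{>N}u_0=:v_0+w_0$. Then
\[\|v_0\|_{H^1}\lesssim N^{1-s}\|u_0\|_{H^s},\qquad \|w_0\|_{H^{s}}\le \|u_0\|_{H^s},\qquad \|w_0\|_{H^{\sigma}}\lesssim N^{\sigma-s}\|u_0\|_{H^s}\ (\sigma<s).\]
We evolve $w$ by the gBBM equation itself, which is globally well-posed in $H^1$ only for smooth data, so instead we let $w$ solve the equation at regularity $\frac{p-2}{2p}$ with small data; for $N$ large, $\|w_0\|_{H^{(p-2)/(2p)}}$ is small. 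By Proposition \ref{prop:Hs_multilinear}, this small-data problem has a solution on a long time interval whose length grows like a negative power of the small norm. Because $p$ is odd, $u^p$ has a definite sign structure. The remainder $v=u-w$ then satisfies
\[(1-\partial_x^2)v_t+(v+(v+w)^p-w^p)_x=0,\]
with smooth data $v_0\in H^1$.

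The next step is an energy estimate for $v$. We pair with $v$ to get
\[\frac{d}{dt}E[v]=-\int \big((v+w)^p-w^p\big)_x v\,dx=\int \big((v+w)^p-w^p\big)v_x\,dx.\]
The pure term $\int v^p v_x$ vanishes. The cross terms $v^{k}w^{p-k}v_x$ with $1\le k\le p-1$ are estimated with $w$ in $L^\infty_tH^s$ and $v$ in $H^1$. Here the new ingredient enters: instead of $E[v]$ we use the modified Hamiltonian
\[H[v]=\tfrac12\int (v^2+v_x^2)+\tfrac{1}{p+1}\int \big((v+w)^{p+1}-w^{p+1}-(p+1)w^pv\big)\,dx,\]
or the Hamiltonian $\int(\tfrac12 u^2+\tfrac{1}{p+1}u^{p+1})$ conserved at the $H^{1/2-}$ level, which is coercive for odd $p$. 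This absorbs the highest-degree contributions in $v$, so that $\frac{d}{dt}H$ involves only terms with at least two factors of $w$. For $p=3$ these can be bounded by $\|w\|_{H^{1/4}}^2\,H[v]^{1/2}$-type quantities using $H^{1/4}\hookrightarrow L^4$. For $p=5$, the embedding $H^{s}\hookrightarrow L^\infty$ for $s>\frac12$ makes the error linear in $H[v]$ up to small factors.

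Finally we iterate. A Gr\"onwall step on each local interval of length $\delta\sim N^{-\alpha}$ gives an increment of $H[v]$ of size $\lesssim N^{-\beta}H[v]$, or an additive $N^{2(1-s)-\gamma}$. At the end of each step we redistribute the small high-frequency part of $u$ back into new $w$. This yields $H[v(t)]\lesssim N^{2(1-s)}$ up to time $T$ provided the number of steps $T/\delta$ times the increment stays below $N^{2(1-s)}$. This is the standard Bourgain bookkeeping, and it closes precisely when $s\ge\frac14$ for $p=3$ and $s>\frac12$ for $p=5$.

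The main obstacle is this bookkeeping of exponents: showing that the growth of $H[v]$ per step is genuinely smaller than $N^{2(1-s)}$ divided by the number of steps. This hinges on the Hamiltonian correction killing the top-degree term $v^{p-1}w$, which otherwise makes the Gr\"onwall factor $\exp(C\|v\|^{p-1}\delta)$ blow up. I would first try the case $p=3$ explicitly and check that the worst term $\int v^2 w\,v_x$ cancels after the correction.
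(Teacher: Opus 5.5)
\textbf{Where the argument breaks.} Your setup matches the paper's, with the letters $v,w$ interchanged: you split $u_0=v_0+w_0$ with $v_0=P_{\le N}u_0\in H^1$, evolve the small rough part $w$ by gBBM on $[0,T]$, and use the identity $\frac{d}{dt}E[v]=\int((v+w)^p-w^p-v^p)v_x\,dx$. The gap is in the step meant to control the superlinear cross terms. The modified functional $H[v]=E[v]+\frac1{p+1}\int((v+w)^{p+1}-w^{p+1}-(p+1)w^pv)\,dx$ does not cancel the top-degree term, which for $p=3$ is $3\int v^2w\,v_x\,dx=-\int v^3w_x\,dx$. Differentiate the correction and insert $v_t=-\phi(\partial_x)(v+(v+w)^p-w^p)$ and $w_t=-\phi(\partial_x)(w+w^p)$. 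You only get terms $\int f\,\phi(\partial_x)g$ with the order $-1$ operator $\phi(\partial_x)=(1-\partial_x^2)^{-1}\partial_x$, never a local first-order term that could offset $\int v^3w_x$.

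This is a KdV-type mechanism with no analogue here. For gBBM, $E[u]$ and $M[u]$ are each conserved, so $E[u]+\frac1{p+1}\int u^{p+1}$ is not, and nothing gives your combination a cancellation structure. Your fallback of replacing $E[v]$ by $M[u]$ is vacuous: $M[u]$ is constant and does not control $\|v\|_{H^1}$. Consequently the iteration in $N$ has no valid increment bound. The claim that it closes exactly at $s\ge\frac14$ and $s>\frac12$ is asserted, not derived.

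\textbf{The missing idea.} The fix is simpler and removes the $N$-bookkeeping entirely. Group the cross terms so that, besides $v\,v_x$, only the \emph{full} solution $u=v+w$ and the rough part $w$ appear. For $p=3$, $(v+w)^3-v^3-w^3=3vw\,u$, hence
\[\frac12\frac{d}{dt}\|v\|_{H^1}^2=3\int v_x\,v\,w\,u\,dx\lesssim\|v\|_{H^1}^2\,\|w\|_{L^4}\,\|u\|_{L^4}.\]
Both remaining factors are bounded on $[0,T]$:
\begin{enumerate}
\item $\|u(t)\|_{L^4}$ is bounded uniformly in time by conservation of $M[u]$, which is coercive because $p+1$ is even, together with $H^{1/4}\hookrightarrow L^4$ at $t=0$.
\item $\|w(t)\|_{L^4}\lesssim\|w\|_{C([0,T];H^s)}\lesssim\|u_0\|_{H^s}$ once $N$ is chosen with $\|P_{>N}u_0\|_{H^s}^{p-1}\lesssim T^{-1}$. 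This is possible since the tail tends to $0$; note you need $w$ controlled in $H^s$ on $[0,T]$, not only in $H^{(p-2)/(2p)}$.
\end{enumerate}
This gives a linear Gr\"onwall inequality with a time-independent rate, so $\|v(t)\|_{H^1}\lesssim\|v_0\|_{H^1}e^{Ct}$ on $[0,T]$. Local theory for the perturbed equation plus a continuation argument then finishes.

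For $p=5$ the same factorization reads $5vw\,u\,(u^2-uw+w^2)$. Since $M$ controls only $\|u\|_{L^6}$, the term $u^3w$ forces $w\in L^\infty$, and this is exactly where $s>\frac12$ enters.
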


\begin{remark}
The novelty in Theorem \ref{thm:GWP} lies in the low-regularity Sobolev regime
\[p=3, \quad s\ge \frac14, \qquad p=5, \quad s>\frac12,\]
since the case $s\ge 1$ follows from the classical energy method. We also note that earlier global well-posedness results for the generalized BBM equation by Bona and Chen \cite{BC2003} were obtained in $L^2\cap L^{p+1}$ naturally associated with the Hamiltonian structure. In contrast, Theorem \ref{thm:GWP} is formulated in the Sobolev framework and yields global well-posedness for Sobolev initial data below $H^1$.
\end{remark}

\begin{remark}[Sketch of proof]
The generalized BBM equation \eqref{eq:gBBM} admits the conserved quantity
\begin{equation}\label{eq:Hamiltonian}
M[u](t)=\frac12\int_{\mathcal M} u^2(t,x)\,dx+\frac{1}{p+1}\int_{\mathcal M} u^{p+1}(t,x)\,dx,
\end{equation}
which is preserved for smooth solutions and extends by continuity to the corresponding low-regularity range under consideration. Although this functional does not directly control the $H^s$-norm when $s<1$, it plays a crucial role when combined with a frequency decomposition argument. More precisely, the proof is based on the Bona--Tzvetkov approach \cite{BT}, while being initially inspired by Bourgain's high--low method \cite{Bourgain1998,Bourgain1999}; see also \cite{CKSTT2003} for a refined formulation of this general frequency decomposition philosophy.

\medskip

In this approach, the initial data $u_0 \in H^s(\mathcal M)$ are decomposed into a regular part $w_0 \in H^1(\mathcal M)$ and a small rough part $v_0 \in H^s(\mathcal M)$. The rough component $v$, associated with $v_0$, remains small in $H^s(\mathcal M)$ on a sufficiently long time interval, while the regular component $w$ satisfies a perturbed equation whose $H^1$-norm can be controlled by a Gr\"onwall-type argument.

\medskip

For $p=3$, the threshold $s\ge \frac14$ is closely tied to the conserved quantity $M[u]$, which directly yields an $L^4$ bound on the full solution $u$ and is sufficient to close the perturbative energy estimate. For $p=5$, the conserved quantity still provides the essential $L^6$ control, but this itself is not sufficient. In this case, the argument also requires $L^\infty$ control of the rough component $v$, and this imposes the condition $s>\frac12$ via Sobolev embedding. See Section \ref{sec:GWP} for further details.
\end{remark}

\begin{remark}[Limitations of the present global argument]
The global result (Theorem \ref{thm:GWP}) is currently restricted to only a few odd-power cases, namely $p=3$ and $p=5$, for at least two reasons. The first is that, below $H^1$, the only available conserved quantity is the Hamiltonian \eqref{eq:Hamiltonian}. When $p$ is odd, the exponent $p+1$ is even, so the second term in \eqref{eq:Hamiltonian} is nonnegative, and thus it controls both the $L^2$- and $L^{p+1}$-norms. In contrast, when $p$ is even, the exponent $p+1$ is odd, and the term $\int_{\mathcal M}u^{p+1}\,dx$ is no longer sign-definite. In particular, \eqref{eq:Hamiltonian} cannot control the $L^{p+1}$-norm, so our argument is not available for the even-power cases.

\medskip

Even in the odd-power case, \eqref{eq:Hamiltonian} is limited to controlling the $L^2$- and $L^{p+1}$-norms. In the perturbative $H^1$-energy estimate, one encounters terms requiring higher integrability of $u$, and for $p\ge 7$ this already exceeds the control provided by \eqref{eq:Hamiltonian}. This explains both why the present method closes only for $p=3$ and $p=5$, and why there remains a gap between the sharp local threshold $s\ge \frac{p-2}{2p}$ and the currently available global theory.
\end{remark}

\begin{remark}[Open problems]
From our results, several natural questions remain open. First, below the threshold $s<\frac{p-2}{2p}$, our result gives only weak ill-posedness, that is, the failure of $C^p$ differentiability of the flow map. It would be interesting to determine whether stronger forms of ill-posedness hold in this range, such as lack of uniform continuity, lack of continuity, or norm inflation. Second, it remains open to improve the current global theory toward the sharp local threshold. Finally, extending the low-regularity global theory in the Sobolev framework to higher odd-powers $p\ge 7$ or to even-powers requires new \emph{a priori} estimates or a different global argument.
\end{remark}

\subsection*{Notations}
For nonnegative quantities $x$ and $y$, we write $x \lesssim y$ if there exists a constant $C>0$ such that $x \le Cy$, and $x \sim y$ if $x \lesssim y$ and $y \lesssim x$. We also use the standard notation
\[\langle \zeta\rangle:=(1+|\zeta|^2)^{1/2}.\]

For simplicity, we use $\mathcal F$ to denote both the Fourier transform on $\mathbb R$ and the Fourier series transform on $\mathbb T$, depending on the underlying spatial domain, and we also write $\widehat{f}$ for the corresponding transform. More precisely, for $f\in \mathcal S'(\mathbb R)$,
\[\widehat{f}(\xi)=\frac{1}{\sqrt{2\pi}}\int_{\mathbb R}e^{-ix\xi}f(x)\,dx,\qquad \xi\in\mathbb R,\]
while for $f\in \mathcal D'(\mathbb T)$,
\[\widehat{f}(n)=\frac{1}{\sqrt{2\pi}}\int_{\mathbb T}e^{-inx}f(x)\,dx,\qquad n\in\mathbb Z.\]

For $s\in\mathbb R$, the inhomogeneous Sobolev space $H^s(\mathcal M)$ is defined by the norm
\[\|f\|_{H^s(\mathbb R)}:=\|\langle \xi\rangle^s\widehat{f}(\xi)\|_{L^2_\xi},\qquad \|f\|_{H^s(\mathbb T)}:=\|\langle n\rangle^s\widehat{f}(n)\|_{\ell^2_n}.\]

When needed, we write $P_{\leq L}$ for the Fourier projection onto $\{|\xi|\le L\}$ on $\mathbb R$, and analogously for the corresponding frequency truncation on $\mathbb T$.

\subsection*{Organization of the paper}
In Section \ref{sec:WP}, we prove the unconditional local well-posedness result, Theorem \ref{thm:unconditional_well_posedness}. The key ingredient is the sharp multilinear estimate in Proposition \ref{prop:Hs_multilinear}. In Section \ref{sec:IP}, we prove Theorem \ref{thm:weak_ill_posedness}, showing that the flow map cannot be of class $C^p$ below the critical threshold. Finally, in Section \ref{sec:GWP}, we establish the low-regularity global well-posedness result, Theorem \ref{thm:GWP}.

\subsection*{Acknowledgments} C. K. was partially supported by Young Research Program of the National Research Foundation of Korea(NRF) grant funded by the Korea government(MSIT) (No. RS-2023-00210210) and Global - Learning \& Academic research institution for Master’s·PhD students, and Postdocs(G-LAMP) Program of the National Research Foundation of Korea(NRF) grant funded by the Ministry of Education(No. RS-2025-25442252).

\section{Proof of Theorem \ref{thm:unconditional_well_posedness}}\label{sec:WP}

In this section, we prove Theorem \ref{thm:unconditional_well_posedness}. First, we establish the sharp multilinear estimate stated in Proposition \ref{prop:Hs_multilinear}. Next, we use this estimate to construct local solutions by a contraction mapping argument in $C([0,T];H^s(\mathcal M))$. 

\subsection{Sharp multilinear estimate}

For convenience, we restate Proposition \ref{prop:Hs_multilinear}.

\begin{proposition}\label{prop:multilinear_sharp}
Let $p \ge 2$ be an integer. Then, for every $s \ge \frac{p-2}{2p}$, one has
\begin{equation}\label{eq:multilinear_estimate}
\|\phi(\partial_x)(u_1u_2\cdots u_p)\|_{H^s(\mathcal M)} \lesssim \prod_{j=1}^p \|u_j\|_{H^s(\mathcal M)},
\end{equation}
where $\mathcal M=\mathbb R$ or $\mathbb T$, and $\phi(\partial_x)=(1-\partial_x^2)^{-1}\partial_x$. Moreover, this estimate is sharp in the sense that \eqref{eq:multilinear_estimate} fails whenever $s<\frac{p-2}{2p}$.
\end{proposition}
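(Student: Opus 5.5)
The estimate is proved through the $L^1$ mechanism mentioned in the remark after the definition of distributional solutions, and sharpness is shown by a high-to-low interaction example.

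\emph{Positive part.} The symbol $\phi$ satisfies $|\phi(\xi)|\langle\xi\rangle^s\lesssim\langle\xi\rangle^{s-1}$. Since $s\le\frac12$ is the relevant range (for $s>\frac12$ the estimate follows from the algebra property of $H^s$ and boundedness of $\phi(\partial_x)$), we have $2(1-s)>1$, so $\langle\xi\rangle^{s-1}\in L^2_\xi$ (resp.\ $\ell^2_n$). Hence
\[
\|\phi(\partial_x)F\|_{H^s}\le \|\langle\xi\rangle^{s-1}\|_{L^2_\xi}\,\|\widehat F\|_{L^\infty_\xi}\lesssim \|F\|_{L^1},
\]
using $\|\widehat F\|_{L^\infty}\lesssim\|F\|_{L^1}$ on both $\mathbb R$ and $\mathbb T$. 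Apply this with $F=u_1\cdots u_p$. By H\"older, $\|u_1\cdots u_p\|_{L^1}\le\prod_j\|u_j\|_{L^p}$. By the one-dimensional Sobolev embedding $H^s\hookrightarrow L^p$, valid exactly when $s\ge\frac12-\frac1p=\frac{p-2}{2p}$ (for $p=2$, $H^0=L^2$), each factor is bounded by $\|u_j\|_{H^s}$.

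For $s>\frac12$ the algebra argument is routine. For $s\in[\frac{p-2}{2p},\frac12]$ the argument above applies directly; the case $s=\frac12$ is fine because $H^{1/2}\hookrightarrow L^p$ for finite $p$. On $\mathbb T$ all steps are identical, with sums in place of integrals.

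\emph{Sharpness.} Take all $u_j=f$ with $f$ concentrated at high frequency, so that $f^p$ has a low-frequency component of size comparable to $\|f\|_{L^p}^p$.

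On $\mathbb R$, let $\widehat f=\mathbf 1_{[N,N+1]}+\mathbf 1_{[-N-1,-N]}$. Then $\|f\|_{H^s}\sim N^s$. The Fourier transform of $f^p$ is a $p$-fold convolution. Choosing roughly half of the frequencies near $+N$ and half near $-N$ (for $p$ even), the convolution at $|\xi|\sim1$ is positive and of size $\sim1$ on an interval of length $\sim1$. This requires all contributions to be nonnegative, which holds since the convolution of indicators is nonnegative. For $p$ odd, a zero-sum combination of the form $\pm N$ is impossible, so we instead use components at $N$, $-N$ and at frequency $\sim$ size comparable (e.g.\ $\widehat f$ supported on $[N,N+1]\cup[-2N-1,-2N]\cup\dots$); most simply, choose supports at $N$ and $-(p-1)N$ suitably weighted, or alternatively take $f$ with $\widehat f=\mathbf 1_{[-N,N]}$, a Dirichlet-type bump, for which $\widehat{f^p}(\xi)\gtrsim N^{p-1}$ for $|\xi|\le1$ and $\|f\|_{H^s}\sim N^{s+\frac12}$. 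I will commit to this last choice. The left side of \eqref{eq:multilinear_estimate} is then $\gtrsim N^{p-1}$, since $|\phi(\xi)|\sim1$ for $\frac12\le|\xi|\le1$. The right side is $N^{p(s+\frac12)}$. The inequality forces $p-1\le ps+\frac p2$, i.e.\ $s\ge\frac{p-2}{2p}$, which is exactly the threshold.

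On $\mathbb T$, take $\widehat f(n)=\mathbf 1_{|n|\le N}$. The convolution $\widehat{f^p}(1)$ counts solutions of $n_1+\dots+n_p=1$ with $|n_j|\le N$, which is $\sim N^{p-1}$. Meanwhile $\|f\|_{H^s}\sim N^{s+\frac12}$ for $s>-\frac12$, giving the same computation.

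The main point to check carefully is the lower bound $\widehat{f^p}\gtrsim N^{p-1}$ at low frequencies. This holds because all terms are nonnegative and the relevant region has volume $\sim N^{p-1}$.
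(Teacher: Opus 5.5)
Your positive argument has a gap at the endpoint $s=\frac12$, which the statement includes. You write that $s\le\frac12$ gives $2(1-s)>1$. At $s=\frac12$, however, $2(1-s)=1$, so $\langle\xi\rangle^{s-1}=\langle\xi\rangle^{-1/2}$ is not in $L^2_\xi$, and likewise $\langle n\rangle^{-1/2}\notin\ell^2_n$; the integral diverges logarithmically. The bound $\|\phi(\partial_x)F\|_{H^{1/2}}\lesssim\|F\|_{L^1}$ is in fact false. For an approximate identity $F=N\rho(N\cdot)$ the $L^1$ norm stays bounded, while $\|\phi(\partial_x)F\|_{H^{1/2}}^2\gtrsim\int_1^{cN}\xi^{-1}\,d\xi\sim\log N$. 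The algebra property needs $s>\frac12$, so neither of your arguments covers $s=\frac12$. Your remark that $H^{1/2}\hookrightarrow L^p$ concerns the wrong step.

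The fix, which is how the paper handles this case, is to use $\sup_\xi\langle\xi\rangle^{1/2}|\phi(\xi)|<\infty$, i.e.\ $\|\phi(\partial_x)F\|_{H^{1/2}}\lesssim\|F\|_{L^2}$. Then apply H\"older, $\|u_1\cdots u_p\|_{L^2}\le\prod_j\|u_j\|_{L^{2p}}$, together with $H^{1/2}\hookrightarrow L^{2p}$. For $s>\frac12$ and for $\frac{p-2}{2p}\le s<\frac12$ your argument is exactly the paper's.

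Your sharpness argument is correct, but it uses a different example from the paper. You should delete the first narrow-band example $\mathbf 1_{[N,N+1]}+\mathbf 1_{[-N-1,-N]}$, which only yields the non-sharp condition $s\ge 0$. Your committed example is a single function with $\widehat f=\mathbf 1_{[-N,N]}$ (resp.\ $\mathbf 1_{\{|n|\le N\}}$). You bound $\widehat{f^p}$ near the origin from below by the volume (resp.\ count) of $\{\xi_1+\cdots+\xi_p=\xi,\ |\xi_j|\le N\}$, which for large $N$ contains the cube $|\xi_j|\le N/p$, $j\le p-1$.

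The paper instead uses distinct, purely high-frequency inputs: $\widehat u_j=\chi_{[N,2N]}$ for $j\le p-1$ and $\widehat u_p=\chi_{[-2(p-1)N,-(p-1)N]}$. It then uses the scaling identity $\widehat U=N^{p-1}\Psi(\cdot/N)$ with $\Psi(0)>0$.

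Your choice is shorter, and it directly disproves the diagonal estimate $\|\phi(\partial_x)(u^p)\|_{H^s}\lesssim\|u\|_{H^s}^p$, which is the one relevant to the equation. The paper's choice keeps every input at $|\xi|\sim N$, so $\|u_j\|_{H^s}\sim N^{s+\frac12}$ for every real $s$. It also isolates the pure high-to-low interaction that is reused in the ill-posedness proof.

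Your computation $\|f\|_{H^s}\sim N^{s+\frac12}$ requires $s>-\frac12$. For $s\le-\frac12$ one has only $\|f\|_{H^s}\lesssim(\log N)^{1/2}$. This only strengthens the contradiction, but you should say so on $\mathbb R$ as you did on $\mathbb T$.
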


\begin{proof}
We first prove \eqref{eq:multilinear_estimate}. Recall that the Fourier symbol of $\phi(\partial_x)$ is
\[\phi(\xi)=\frac{i\xi}{1+\xi^2}\]
on $\mathbb R$, and similarly
\[\phi(n)=\frac{in}{1+n^2}\]
on $\mathbb T$. In either case,
\[\langle \cdot\rangle^s |\phi(\cdot)| \lesssim \langle \cdot\rangle^{s-1},\]
and hence
\begin{equation}\label{eq:boundedness of phi}
\|\phi(\partial_x)f\|_{H^s(\mathcal M)} \lesssim \|f\|_{H^{s-1}(\mathcal M)}.
\end{equation}
First we assume $s\ge \frac12$. If $s>\frac12$, then $H^s(\mathcal M)$ is an algebra, and hence by \eqref{eq:boundedness of phi},
\[\|\phi(\partial_x)(u_1u_2\cdots u_p)\|_{H^s(\mathcal M)} \lesssim \|u_1\cdots u_p\|_{H^{s-1}(\mathcal M)} \le \|u_1\cdots u_p\|_{H^s(\mathcal M)} \lesssim \prod_{j=1}^p \|u_j\|_{H^s(\mathcal M)}.\]
At the endpoint $s=\frac12$, although $H^{\frac12}(\mathcal M)$ is not an algebra, we still have
\[\|\phi(\partial_x)f\|_{H^{\frac12}(\mathcal M)} \lesssim \|f\|_{L^2(\mathcal M)},\]
since
\[\sup_{\zeta}\langle \zeta\rangle^{\frac12}|\phi(\zeta)|<\infty,\]
where $\zeta=\xi$ on $\mathbb R$ and $\zeta=n$ on $\mathbb T$. Therefore, by the H\"older inequality and the embedding
\[H^{\frac12}(\mathcal M)\hookrightarrow L^{2p}(\mathcal M)\]
in one dimension, we obtain
\[\|\phi(\partial_x)(u_1u_2\cdots u_p)\|_{H^{\frac12}(\mathcal M)} \lesssim\|u_1\cdots u_p\|_{L^2(\mathcal M)} \le \prod_{j=1}^p \|u_j\|_{L^{2p}(\mathcal M)} \lesssim \prod_{j=1}^p \|u_j\|_{H^{\frac12}(\mathcal M)}.\]
Thus \eqref{eq:multilinear_estimate} holds for $s\ge \frac12$.

Next, assume
\[\frac{p-2}{2p} \le s < \frac12.\]
Since $2s-2<-1$, we have, on both $\mathbb R$ and $\mathbb T$, that
\[\|f\|_{H^{s-1}(\mathcal M)} \lesssim \|f\|_{L^1(\mathcal M)}.\]
Indeed, on $\mathbb R$ this follows from the bound $|\widehat f(\xi)|\le \|f\|_{L^1}$ together with the integrability of $\langle \xi\rangle^{2s-2}$, while on $\mathbb T$ it follows from $|\widehat f(n)|\le \|f\|_{L^1}$ together with the summability of $\langle n\rangle^{2s-2}$. Therefore, by the H\"older inequality and the Sobolev embedding
\[H^s(\mathcal M)\hookrightarrow L^p(\mathcal M),\]
which is valid in one dimension since
\[s\ge \frac12-\frac1p=\frac{p-2}{2p},\]
we obtain
\[\|\phi(\partial_x)(u_1\cdots u_p)\|_{H^s(\mathcal M)} \lesssim \|u_1\cdots u_p\|_{H^{s-1}(\mathcal M)} \lesssim \|u_1\cdots u_p\|_{L^1(\mathcal M)} \le \prod_{j=1}^p \|u_j\|_{L^p(\mathcal M)} \lesssim \prod_{j=1}^p \|u_j\|_{H^s(\mathcal M)}.\]

This proves \eqref{eq:multilinear_estimate} for all $s \ge \frac{p-2}{2p}$.

\medskip

We now prove sharpness. A dyadic decomposition shows that the worst contribution comes from high-to-low frequency interactions, in which several high-frequency inputs combine to produce a low-frequency output. This motivates the following counterexample, where we choose balanced high-frequency inputs whose interaction contains frequencies near the origin.

We first consider the case $\mathcal M=\mathbb R$. Let $N \gg 1$, and define $u_1, \dots, u_p$ by
\[\widehat{u}_j(\xi)=\chi_{[N,2N]}(\xi), \quad 1\le j\le p-1, \quad \text{and} \quad \widehat{u}_p(\xi)=\chi_{[-2(p-1)N,-(p-1)N]}(\xi).\]
Then
\[\|u_j\|_{H^s(\mathbb R)} \sim N^{s+\frac12}, \qquad 1\le j\le p,\]
and thus
\[\prod_{j=1}^p \|u_j\|_{H^s(\mathbb R)} \sim N^{ps+\frac p2}.\]

Set $U=u_1u_2\cdots u_p$. Then
\[\widehat U(\xi)=(\widehat{u}_1*\cdots *\widehat{u}_p)(\xi).\]
Performing the change of variables $\xi_j=N\eta_j$ for $1\le j\le p-1$ in the convolution integrals, one has
\[\widehat U(\xi)=N^{p-1}\Psi\left(\frac{\xi}{N}\right),\]
where
\[\Psi=\underbrace{\chi_{[1,2]}*\cdots *\chi_{[1,2]}}_{p-1 \text{ copies}}*\chi_{[-2(p-1),-(p-1)]}.\]
Suppose $\Psi(0)>0$. Since $\Psi$ is the convolution of compactly supported $L^1$ functions, it is continuous. Hence there exists $\varepsilon>0$ such that
\[\Psi(\eta)\ge c_0>0 \quad \text{for all } |\eta|\le \varepsilon.\]
Hence, for $|\xi| \le 1$ and $N$ sufficiently large,
\[\widehat U(\xi)=N^{p-1}\Psi\left(\frac{\xi}{N}\right)\gtrsim N^{p-1}.\]
Using $|\phi(\xi)| \sim |\xi|$ for $|\xi| \le 1$, we obtain
\[\|\phi(\partial_x)U\|_{H^s(\mathbb R)}^2 \gtrsim \int_{|\xi| \le 1}\langle \xi\rangle^{2s}|\xi|^2|\widehat U(\xi)|^2\,d\xi \gtrsim N^{2p-2}\int_{|\xi|\le1}|\xi|^2\,d\xi \sim N^{2p-2}.\]
Thus,
\[\|\phi(\partial_x)U\|_{H^s(\mathbb R)} \gtrsim N^{p-1}.\]
If \eqref{eq:multilinear_estimate} holds for some $s<\frac{p-2}{2p}$, then we immediately have
\[N^{p-1} \lesssim N^{ps+\frac p2},\]
which contradicts
\[p-1>ps+\frac p2\]
for large $N$. To complete the proof, we now prove $\Psi(0)>0$. Set
\[J_j = \left[1,\frac32\right], \quad 1\le j\le p-1, \quad \text{and} \quad J_p = \left[-\frac32(p-1),-(p-1)\right].\]
Then $J_j \subset [1,2]$ for $1\le j\le p-1$ and $J_p\subset [-2(p-1),-(p-1)]$. Moreover, whenever $\eta_j \in J_j$ for $1\le j\le p-1$, one has
\[-\sum_{j=1}^{p-1}\eta_j \in J_p.\]
Thus
\[\Psi(0) \ge \int_{J_1 \times \cdots \times J_{p-1}} 1\,d\eta_1 \cdots d\eta_{p-1} = \left(\frac12\right)^{p-1} > 0,\]
which proves our claim. Therefore, \eqref{eq:multilinear_estimate} fails on $\mathbb R$ whenever $s<\frac{p-2}{2p}$.

\medskip

We next consider the case $\mathcal M=\mathbb T$. Let $N\gg1$ be an integer, and define $u_1,\dots,u_p$ by
\[\widehat{u}_j(n)=\mathbf 1_{\{N,\dots,2N\}}(n), \quad 1\le j\le p-1, \quad \text{and} \quad \widehat{u}_p(n)=\mathbf 1_{\{1-2(p-1)N,\dots,1-(p-1)N\}}(n).\]
Again,
\[\|u_j\|_{H^s(\mathbb T)} \sim N^{s+\frac12}, \qquad 1\le j\le p,\]
and hence
\[\prod_{j=1}^p \|u_j\|_{H^s(\mathbb T)} \sim N^{ps+\frac p2}.\]
Set $U=u_1u_2\cdots u_p$. Then
\[\widehat U(1)=\sum_{n_1+\cdots+n_p=1}\widehat{u}_1(n_1)\cdots \widehat{u}_p(n_p).\]
Suppose $|\widehat U(1)|\gtrsim N^{p-1}$. Since $|\phi(1)|=\frac12$, we obtain
\[\|\phi(\partial_x)U\|_{H^s(\mathbb T)}^2 \ge \langle 1\rangle^{2s}|\phi(1)|^2|\widehat U(1)|^2 \gtrsim N^{2p-2}.\]
Thus,
\[\|\phi(\partial_x)U\|_{H^s(\mathbb T)} \gtrsim N^{p-1}.\]
If \eqref{eq:multilinear_estimate} holds for some $s<\frac{p-2}{2p}$, then we immediately have
\[N^{p-1} \lesssim N^{ps+\frac p2},\]
which is a contradiction for large $N$. To complete the proof, we now prove $|\widehat U(1)|\gtrsim N^{p-1}$. Set $M=\lfloor N/4\rfloor$. For every choice of integers $n_1,\dots,n_{p-1} \in \{N,\dots,N+M\}$, define
\[n_p = 1-\sum_{j=1}^{p-1}n_j.\]
Then
\[1-(p-1)(N+M) \le n_p \le 1-(p-1)N.\]
Since $M\le N$, it follows that
\[1-(p-1)(N+M) \ge 1-2(p-1)N,\]
and hence
\[n_p \in \{1-2(p-1)N,\dots,1-(p-1)N\}.\]
This says that for each such choice of $(n_1,\dots,n_{p-1})$, all frequencies $n_1,\dots,n_p$ lie in the corresponding supports of $\widehat{u}_1, \dots, \widehat{u}_p$ and satisfy $n_1 + \cdots + n_p = 1$. Thus every such $(n_1,\dots,n_{p-1})$ contributes a nonzero term to $\widehat U(1)$. Since $M+1 \sim N$, we conclude that
\[|\widehat U(1)|\ge (M+1)^{p-1} \sim N^{p-1},\]
which proves our claim. Therefore, \eqref{eq:multilinear_estimate} also fails on $\mathbb T$ whenever $s<\frac{p-2}{2p}$.

This completes the proof.
\end{proof}

\subsection{Local well-posedness}

\begin{proof}[Proof of Theorem \ref{thm:unconditional_well_posedness}]
Let $s\ge \frac{p-2}{2p}$ and let $u_0 \in H^s(\mathcal M)$. We use Duhamel's formula \eqref{eq:Duhamel} and define the flow map
\[\Phi_{u_0}(u)(t) := S(t)u_0-\int_0^t S(t-t')\phi(\partial_x)\bigl(u^p(t')\bigr)\,dt'.\]
We write
\[\|u\|_{C_TH^s}:=\sup_{t\in[0,T]}\|u(t)\|_{H^s(\mathcal M)}.\]
Set
\[R:=\|u_0\|_{H^s(\mathcal M)}, \qquad \mathcal B:=\left\{u\in C([0,T];H^s(\mathcal M)):\|u\|_{C_TH^s}\le 2R\right\}.\]
Since $S(t)$ is unitary on $H^s(\mathcal M)$, we have
\[\|S(t)u_0\|_{C_TH^s} = \|u_0\|_{H^s(\mathcal M)} = R.\]
Moreover, by Proposition \ref{prop:multilinear_sharp},
\[\|\phi(\partial_x)(u^p)\|_{H^s(\mathcal M)} \lesssim \|u\|_{H^s(\mathcal M)}^p\]
for all $u \in H^s(\mathcal M)$. Therefore, for $u \in \mathcal B$,
\[\begin{aligned}
\|\Phi_{u_0}(u)\|_{C_TH^s} \le&~{} \|S(t)u_0\|_{C_TH^s}  + \sup_{t\in[0,T]}\int_0^t \|S(t-t')\phi(\partial_x)(u^p)(t')\|_{H^s}\,dt' \\
\le&~{} R+\int_0^T \|\phi(\partial_x)(u^p)(t')\|_{H^s}\,dt' \\
\lesssim&~{} R+T\|u\|_{C_TH^s}^p \\
\le&~{} R+C\,T(2R)^p
\end{aligned}\]
for some constant $C>0$ depending only on $p$ and $s$. Hence $\Phi_{u_0}$ maps $\mathcal B$ into itself provided that
\[C\,T(2R)^{p-1}\le \frac12.\]

Next, let $u,v\in \mathcal B$. Using
\[u^p-v^p=(u-v)\sum_{k=0}^{p-1}u^{p-1-k}v^k\]
and Proposition \ref{prop:multilinear_sharp}, we obtain
\[\begin{aligned}
\|\Phi_{u_0}(u)-\Phi_{u_0}(v)\|_{C_TH^s} \le&~{} \sup_{t\in[0,T]}\int_0^t \|\phi(\partial_x)(u^p-v^p)(t')\|_{H^s}\,dt' \\
\le&~{} T\,\|\phi(\partial_x)(u^p-v^p)\|_{C_TH^s} \\
\le&~{} C\,T\sum_{k=0}^{p-1}\|u\|_{C_TH^s}^{\,p-1-k}\|v\|_{C_TH^s}^{\,k}\|u-v\|_{C_TH^s} \\
\le&~{} C\,T\,p(2R)^{p-1}\|u-v\|_{C_TH^s}.
\end{aligned}\]
Thus $\Phi_{u_0}$ is a contraction on $\mathcal B$ provided that
\[C\,T\,p(2R)^{p-1}<1.\]

Choosing $T>0$ so that, for instance,
\[C\,T\,p(2R)^{p-1}\le \frac12,\]
both conditions are satisfied. Therefore, by the Banach fixed point theorem, $\Phi_{u_0}$ has a unique fixed point in $\mathcal B$. This yields a unique local solution
\[u\in C([0,T];H^s(\mathcal M))\]
to \eqref{eq:gBBM}. The same contraction argument also gives continuous dependence on the initial data.
\end{proof}

\section{Proof of Theorem \ref{thm:weak_ill_posedness}}\label{sec:IP}

The proof follows the standard argument (see, for instance, Bona--Tzvetkov \cite{BT} and Tzvetkov \cite{Tzvetkov1999}) to show the failure of higher Fr\'echet differentiability of dispersive flow maps. We first record the general consequence of assuming that the flow map is of class $C^p$ near the origin.

\begin{lemma}\label{lem:Cp_expansion}
Let $\mathcal M=\mathbb R$ or $\mathbb T$, and assume that the flow map $\Phi$ from $H^s(\mathcal M)$ to $C([0,T];H^s(\mathcal M))$ is of class $C^p$ near the origin. For $\psi\in H^s(\mathcal M)$ and sufficiently small $\delta\in\mathbb R$, define
\[u_\delta(t):=\Phi(\delta\psi)(t).\]
Then the map $\delta\mapsto u_\delta$ is $C^p$ from a neighborhood of $0$ in $\mathbb R$ into $C([0,T];H^s(\mathcal M))$, and admits the expansion
\[u_\delta(t)=\sum_{k=1}^p \delta^k u^{(k)}(t)+o(|\delta|^p)\quad \text{in } C([0,T];H^s(\mathcal M)),\]
where
\[u^{(k)}(t):=\frac1{k!}D^k\Phi(0)\underbrace{[\psi,\dots,\psi]}_{k \;\mathrm{copies}},\]
and $D^k\Phi(0)$ denotes the $k$-th Fr\'echet derivative of $\Phi$ at the origin. Moreover,
\begin{equation}\label{eq:frechet_bound_common}
\|u^{(p)}\|_{C([0,T];H^s(\mathcal M))}\lesssim \|\psi\|_{H^s(\mathcal M)}^p,
\end{equation}
and the coefficients satisfy
\begin{equation}\label{eq:up_formula_common}
u^{(1)}(t) = S(t)\psi,\quad u^{(k)}(t) \equiv 0 \text{ for } 1 < k < p,\quad u^{(p)}(t) = -\int_0^t S(t-t')\phi(\partial_x)\left((S(t')\psi)^p\right)\,dt'.
\end{equation}
\end{lemma}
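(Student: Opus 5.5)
The plan is to combine Taylor's theorem in Banach spaces with an identification of the derivatives through the Duhamel formula \eqref{eq:Duhamel}. Since $\Phi$ is $C^p$ near $0$ and $\delta\mapsto\delta\psi$ is linear (hence smooth) from $\mathbb R$ into $H^s(\mathcal M)$, the chain rule shows that $\delta\mapsto u_\delta$ is $C^p$ into $C([0,T];H^s(\mathcal M))$. Its $k$-th derivative at $\delta=0$ is $D^k\Phi(0)[\psi,\dots,\psi]$. Taylor's formula with Peano remainder then gives the stated expansion. The zeroth-order term vanishes because $\Phi(0)=0$: the zero function solves \eqref{eq:gBBM} with zero data, and the flow is assumed well-defined, so its value there is that solution. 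Bound \eqref{eq:frechet_bound_common} follows because $D^p\Phi(0)$ is a bounded symmetric $p$-linear map from $H^s$ into $C([0,T];H^s)$.

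To identify the coefficients $u^{(k)}$, I will insert the expansion into the Duhamel identity
\[u_\delta(t)=\delta S(t)\psi-\int_0^t S(t-t')\phi(\partial_x)\bigl(u_\delta^p(t')\bigr)\,dt'.\]
At the formal level, matching powers of $\delta$ gives the result. The nonlinearity is of order $\delta^p$ because $u_\delta=O(\delta)$. Hence the coefficients of $\delta^k$ for $2\le k<p$ come only from the linear part, which contributes nothing beyond order one, so $u^{(k)}=0$. At order $\delta$ we get $u^{(1)}=S(t)\psi$. At order $\delta^p$ only $(\delta u^{(1)})^p$ contributes, which gives the formula for $u^{(p)}$.

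The main obstacle is justifying this matching at low regularity. When $s<\frac{p-2}{2p}$, Proposition \ref{prop:multilinear_sharp} fails, so I cannot bound $\phi(\partial_x)(u_\delta^p)$ in $H^s$ directly. To get around this, I plan to work in a weaker topology where every term makes sense. I will first choose $\psi$ smooth, for instance Schwartz on $\mathbb R$ or a trigonometric polynomial on $\mathbb T$. For such data, smooth solutions exist by the $H^1$ (or $H^\sigma$, $\sigma\ge\frac12$) theory, and by uniqueness they coincide with $\Phi(\delta\psi)$ for small $\delta$. (The uniqueness here is the one implicit in assuming the flow map is well-defined and continuous. I will take $\Phi$ to be the extension of the smooth flow, as is standard in Bona--Tzvetkov \cite{BT}.) On these smooth solutions, the local theory gives $\|u_\delta\|_{C_TH^1}\lesssim|\delta|$, and it also shows directly that $\delta\mapsto u_\delta$ is analytic into $C_TH^1$. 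Its Taylor coefficients can therefore be computed by differentiating the Duhamel formula in $\delta$ at $0$, which yields \eqref{eq:up_formula_common}. Since $H^1\hookrightarrow H^s$, these coefficients coincide with the $H^s$ Fréchet derivatives, by uniqueness of Taylor expansions in $C_TH^s$.

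Finally, both sides of \eqref{eq:up_formula_common} are continuous $p$-linear expressions in $\psi$ in a weak topology such as $C_TH^{-m}$. So once the identity is established for smooth $\psi$, it extends to all $\psi\in H^s$ by density, and this closes the argument.
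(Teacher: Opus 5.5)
Your argument follows the paper's scheme. The Taylor expansion and \eqref{eq:frechet_bound_common} come from the $C^p$ hypothesis and the chain rule, and the coefficients are identified by differentiating the Duhamel formula at $\delta=0$. Your detour through smooth data is more careful than the paper's one-line ``differentiate in $\delta$''. For $\psi\in H^1$, the $H^1$ contraction makes $\delta\mapsto u_\delta$ analytic into $C([0,T];H^1)$, so differentiating there is legitimate, and uniqueness of Peano-type expansions transfers the coefficients to $C([0,T];H^s)$. You also make explicit that $\Phi$ must agree with the smooth flow, which the paper leaves implicit.

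Your final density paragraph, however, is false and should be dropped. For $s<\frac{p-2}{2p}$, the only regime where the lemma is used, the map $\psi\mapsto-\int_0^tS(t-t')\phi(\partial_x)\bigl((S(t')\psi)^p\bigr)\,dt'$ on smooth $\psi$ is unbounded from the $H^s$ norm into $C([0,T];H^{-m})$ for every $m$. It therefore has no continuous extension to $H^s$. Concretely, the data $\psi_N$ of Section~\ref{sec:IP} satisfy $\|\psi_N\|_{H^s}\sim1$. Yet the output has a component at frequencies $|\xi|\le1$ (respectively at the mode $n=1$) of size $\gtrsim tN^{\gamma}\to\infty$, and at those frequencies all Sobolev norms are comparable. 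This unboundedness is exactly what the proof of Theorem~\ref{thm:weak_ill_posedness} exhibits, so your continuity claim contradicts the result the lemma is meant to serve.

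More basically, for general $\psi\in H^s$ below the threshold, $\psi^p$ need not be a distribution at all, so the right-hand side of \eqref{eq:up_formula_common} is not even defined. For example, take $\psi=\chi(x)|x|^{-a}$ with $\chi$ a smooth cutoff and $\frac1p<a<\frac12-s$, which is possible when $0\le s<\frac{p-2}{2p}$. For $s<0$, $\psi$ need not even be a function. The fix is to state and prove the identification only for $\psi\in H^1$ (or $H^\infty$). That is all that is needed, since the $\psi_N$ have compactly supported Fourier transforms, and your smooth-data argument already covers it.
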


\begin{proof}
The Taylor expansion and the bound \eqref{eq:frechet_bound_common} follow immediately from the assumption that $\Phi$ is of class $C^p$ near the origin. It remains to identify the coefficients. By \eqref{eq:Duhamel}, we write
\[u_\delta(t)=\delta S(t)\psi-\int_0^t S(t-t')\phi(\partial_x)\left(u_\delta(t')^p\right)\,dt'.\]
Differentiating with respect to $\delta$ at $\delta=0$, and using that $u_\delta|_{\delta=0}\equiv0$, we obtain
\[u^{(1)}(t)=S(t)\psi,\qquad u^{(k)}(t)\equiv0\ \text{for }1<k<p,\]
and
\[u^{(p)}(t)=-\int_0^t S(t-t')\phi(\partial_x)\left((S(t')\psi)^p\right)\,dt'.\]
This proves \eqref{eq:up_formula_common}.
\end{proof}

\subsection{Lack of $C^p$-differentiability on $\mathbb R$}

We prove by contradiction. Assume that $\Phi$ is a map from $H^s(\mathbb R)$ to $C([0,T];H^s(\mathbb R))$ of class $C^p$ near the origin. Let $N \gg 1$, and let $u_1, \dots, u_p$ be the functions used in the sharpness part of the proof of Proposition \ref{prop:multilinear_sharp}, namely
\[\widehat{u}_j=\chi_{[N,2N]}, \quad 1 \le j \le p-1,\quad \widehat{u}_p=\chi_{[-2(p-1)N,-(p-1)N]}.\]
Define
\begin{equation}\label{eq:v_Nw_N}
v_N := (p-1)u_1, \quad w_N := u_p,\quad \psi_N:= N^{-s-\frac12}(v_N+w_N).
\end{equation}
It follows that
\[\|\psi_N\|_{H^s(\mathbb R)} \sim 1\]
uniformly in $N$.

Let
\[u_N^{(p)}(t):=\frac1{p!}D^p\Phi(0)\underbrace{[\psi_N,\dots,\psi_N]}_{p \;\mathrm{copies}}.\]
By Lemma \ref{lem:Cp_expansion},
\[u_N^{(p)}(t)=-\int_0^t S(t-t')\phi(\partial_x)\left((S(t')\psi_N)^p\right)\,dt'.\]
Using the binomial theorem, we have
\[(S(t')\psi_N)^p = N^{-p(s+\frac12)}\sum_{k=0}^p\binom{p}{k}(S(t')v_N)^k(S(t')w_N)^{p-k}.\]

As in the proof of Proposition \ref{prop:multilinear_sharp}, the Fourier support of the $k$-th term is contained in
\begin{equation}\label{eq:interval}
k[N,2N]+(p-k)[-2(p-1)N,-(p-1)N].
\end{equation}
If $k\le p-2$, then the upper bound of the interval \eqref{eq:interval} is
\[2kN-(p-k)(p-1)N \le 2(p-2)N-2(p-1)N = -2N,\]
while for $k=p$, the lower bound of the interval \eqref{eq:interval} is $pN$. Hence, for all $k\neq p-1$, the support is disjoint from the interval $[-1,1]$ once $N$ is sufficiently large. Therefore,
\begin{equation}\label{eq:u_N^p low}
P_{\le 1}u_N^{(p)}(t)=-N^{-p(s+\frac12)}\binom{p}{p-1}\int_0^t S(t-t')\phi(\partial_x)P_{\le 1}\left((S(t')v_N)^{p-1}S(t')w_N\right)\,dt'.
\end{equation}
The Fourier transform of \eqref{eq:u_N^p low} is given by
\begin{equation}\label{eq:FT u_N^p}
N^{-p(s+\frac12)}\binom{p}{p-1}\int_0^t e^{-i(t-t')\phi(\xi)}\mathcal F\left(\phi(\partial_x)P_{\le 1}\left((S(t')v_N)^{p-1}S(t')w_N\right)\right)(\xi)\,dt'.
\end{equation}
For the relevant interaction, the oscillatory factor takes the form
\[e^{-i(t-t')\phi(\xi)}e^{-it'((p-1)\phi(\xi_1)+\phi(\xi_p))}=e^{-it\phi(\xi)}e^{-it'\Theta},\]
where
\[\Theta=\phi(\xi)-\sum_{j=1}^p\phi(\xi_j).\]
On the support, we have $|\xi|\le 1$ and $|\xi_j|\sim N$, so
\[|\Theta|\le 1\]
for sufficiently large $N$. Fix
\[0 < t \le \min\left(T, \frac{\pi}{3}\right).\]
Since
\[\left|\int_0^t e^{-it'\Theta}\,dt'\right| \ge \Re\int_0^t e^{-it'\Theta}\,dt' = \int_0^t\cos(t'\Theta)\,dt' \ge \frac t2,\]
we have, for $|\xi| \le 1$,
\[|\eqref{eq:FT u_N^p}| \gtrsim t\,N^{-p(s+\frac12)} \left|\mathcal F\left(P_{\le 1}\phi(\partial_x)(u_1^{p-1}u_p)\right)(\xi)\right|.\]
Since $u_1=\cdots=u_{p-1}$, it follows from the proof of Proposition \ref{prop:multilinear_sharp} that
\[\|P_{\le 1}\phi(\partial_x)(u_1^{p-1}u_p)\|_{H^s(\mathbb R)}\gtrsim N^{p-1}.\]
Thus,
\[\|u_N^{(p)}(t)\|_{H^s(\mathbb R)} \ge \|P_{\le 1}u_N^{(p)}(t)\|_{H^s(\mathbb R)} \gtrsim t\,N^{-p(s+\frac12)}N^{p-1} = tN^\gamma,\]
where
\[\gamma:=\frac{p-2}{2}-ps.\]
Since $s<\frac{p-2}{2p}$, we have $\gamma>0$, and hence
\[\|u_N^{(p)}(t)\|_{H^s(\mathbb R)} \to \infty \quad \text{as } N \to \infty,\]
which contradicts, by \eqref{eq:frechet_bound_common},
\[\|u_N^{(p)}\|_{C([0,T];H^s(\mathbb R))} \lesssim \|\psi_N\|_{H^s(\mathbb R)}^p \sim 1.\]
Therefore, the flow map cannot be of class $C^p$ at the origin on $\mathbb R$.

\subsection{Lack of $C^p$-differentiability on $\mathbb T$}

We follow the same argument as before. Assume that $\Phi$ is a map from $H^s(\mathbb T)$ to $C([0,T];H^s(\mathbb T))$ of class $C^p$ near the origin. For an integer $N \gg 1$, let
\[\widehat{u}_j(n)=\mathbf1_{\{N,\dots,2N\}}(n),\quad 1 \le j \le p-1,\quad \widehat{u}_p(n)=\mathbf1_{\{1-2(p-1)N,\dots,1-(p-1)N\}}(n).\]
Define $v_N$, $w_N$, and $\psi_N$ as in \eqref{eq:v_Nw_N} so that
\[\|\psi_N\|_{H^s(\mathbb T)} \sim 1\]
uniformly in $N$. Define $u_N^{(p)}(t)$ as in the proof on $\mathbb R$. Considering the Fourier coefficient of $u_N^{(p)}(t)$ at the mode $n=1$, the Fourier support of the $k$-th term is contained in
\begin{equation}\label{eq:set}
k\{N,\dots,2N\}+(p-k)\{1-2(p-1)N,\dots,1-(p-1)N\}.
\end{equation}
If $k\le p-2$, then the upper bound of the set \eqref{eq:set} is
\[k(2N)+(p-k)(1-(p-1)N)\le 2(p-2)N+2(1-(p-1)N)=2-2N,\]
while for $k=p$, the lower bound of the set \eqref{eq:set} is $pN$. Hence, for all $k\neq p-1$, the support does not contain the mode $n=1$ once $N$ is sufficiently large. Therefore,
\[\widehat{u}_N^{(p)}(t)(1)=-N^{-p(s+\frac12)}\binom{p}{p-1}\int_0^t e^{-i(t-t')\phi(1)}\mathcal F\left(\phi(\partial_x)\left((S(t')v_N)^{p-1}S(t')w_N\right)\right)(1)\,dt'.\]
For the relevant interaction, the oscillatory factor takes the form
\[e^{-i(t-t')\phi(1)}e^{-it'(\phi(n_1)+\cdots+\phi(n_p))}=e^{-it\phi(1)}e^{-it'\Theta},\]
where
\[\Theta=\phi(n_1)+\cdots+\phi(n_p)-\phi(1),\qquad n_1+\cdots+n_p=1.\]
On the support, we have $\phi(1)=\frac12$ and $|n_j|\sim N$, so $|\phi(n_j)|\lesssim N^{-1}$ for each $j$ and
\[|\Theta|\le \frac34\]
for sufficiently large $N$. Fix
\[0 < t \le \min\left(T,\frac{\pi}{3}\right).\]
Since
\[\left|\int_0^t e^{-it'\Theta}\,dt'\right|\ge \Re\int_0^t e^{-it'\Theta}\,dt'=\int_0^t\cos(t'\Theta)\,dt'\ge \frac t2,\]
we have
\[|\widehat{u}_N^{(p)}(t)(1)| \gtrsim t\,N^{-p(s+\frac12)}\left|\mathcal F\left(\phi(\partial_x)(u_1^{p-1}u_p)\right)(1)\right|.\]
Since $u_1=\cdots=u_{p-1}$, it follows from the proof of Proposition \ref{prop:multilinear_sharp} that
\[\|\phi(\partial_x)(u_1^{p-1}u_p)\|_{H^s(\mathbb T)} \ge \langle1\rangle^s\left|\mathcal F\left(\phi(\partial_x)(u_1^{p-1}u_p)\right)(1)\right|\gtrsim N^{p-1}.\]
Thus,
\[\|u_N^{(p)}(t)\|_{H^s(\mathbb T)} \ge \langle1\rangle^s|\widehat{u}_N^{(p)}(t)(1)|\gtrsim t\,N^{-p(s+\frac12)}N^{p-1}=tN^\gamma,\]
where
\[\gamma:=\frac{p-2}{2}-ps.\]
Since $s<\frac{p-2}{2p}$, we have $\gamma>0$, and hence
\[\|u_N^{(p)}(t)\|_{H^s(\mathbb T)} \to \infty \quad \text{as } N \to \infty,\]
which contradicts, by \eqref{eq:frechet_bound_common},
\[\|u_N^{(p)}\|_{C([0,T];H^s(\mathbb T))} \lesssim \|\psi_N\|_{H^s(\mathbb T)}^p \sim 1.\]
Therefore, the flow map cannot be of class $C^p$ at the origin on $\mathbb T$.

\section{Proof of Theorem \ref{thm:GWP}}\label{sec:GWP}

For $s\ge 1$, the global well-posedness of \eqref{eq:gBBM} is already known from \cite{BBM}. Thus it remains to consider the cases $\frac14\le s<1$ when $p=3$, and $\frac12<s<1$ when $p=5$. As mentioned in the introduction, we mainly follow the Bona--Tzvetkov approach \cite{BT}.

Fix $T>0$ and $u_0\in H^s(\mathcal M)$. We decompose
\[u_0=w_0+v_0,\]
where $w_0$ is the low-frequency part of $u_0$ and $v_0$ is the high-frequency remainder. More precisely, if $\mathcal M=\mathbb R$, we define
\[\widehat{w_0}(\xi):=\mathbf 1_{\{|\xi|<N\}}(\xi)\widehat{u_0}(\xi),\qquad \widehat{v_0}(\xi):=\mathbf 1_{\{|\xi|\ge N\}}(\xi)\widehat{u_0}(\xi),\]
while if $\mathcal M=\mathbb T$, we define
\[\widehat{w_0}(n):=\mathbf 1_{\{|n|<N\}}(n)\widehat{u_0}(n),\qquad \widehat{v_0}(n):=\mathbf 1_{\{|n|\ge N\}}(n)\widehat{u_0}(n),\qquad n\in\mathbb Z.\]

We choose $N\gg1$ so that $\|v_0\|_{H^s(\mathcal M)}$ is sufficiently small depending on $T$. More precisely, we choose $N$ so that, for $p=3$,
\[\|v_0\|_{H^s(\mathcal M)}^2\lesssim T^{-1},\]
whereas, for $p=5$,
\[\|v_0\|_{H^s(\mathcal M)}^4\lesssim T^{-1}.\]
Since the proof of Theorem \ref{thm:unconditional_well_posedness} ensures that, for initial data $f\in H^s(\mathcal M)$, the local existence time satisfies
\begin{equation}\label{eq:local_T}
T_{\mathrm{loc}}\gtrsim \|f\|_{H^s(\mathcal M)}^{-(p-1)},
\end{equation}
this choice of $N$ guarantees that the corresponding solution with initial data $v_0$ exists on the whole interval $[0,T]$, that is, $v\in C([0,T];H^s(\mathcal M))$.

Thus, it suffices to consider the following Cauchy problem:
\begin{equation}\label{eq:perturbed_general}
\begin{cases}
(1-\partial_x^2)w_t+\left(w+(w+v)^p-v^p\right)_x=0,\\
w(0,x)=w_0(x).
\end{cases}
\end{equation}
Indeed, if $w\in C([0,T];H^s(\mathcal M))$ solves \eqref{eq:perturbed_general}, then $u:=v+w$ solves \eqref{eq:gBBM} on $[0,T]$ with initial data $u_0$.

We now consider the two cases separately.

\subsection*{Case 1: $p=3$}
We first derive an a priori $H^1$ bound for $w$. Assume $\frac14\le s<1$. Since $u=w+v$ solves \eqref{eq:gBBM}, the conserved quantity
\[M[u](t)=\frac12\int_{\mathcal M}u^2(t,x)\,dx+\frac14\int_{\mathcal M}u^4(t,x)\,dx\]
is constant in time. In particular,
\[\frac14\|u(t)\|_{L^4(\mathcal M)}^4\le M[u_0].\]
Since $s\ge \frac14$ and $H^{\frac14}(\mathcal M)\hookrightarrow L^4(\mathcal M)$, we obtain
\begin{equation}\label{eq:L4_bound_p3}
\|u(t)\|_{L^4(\mathcal M)}^4\lesssim \|u_0\|_{H^s(\mathcal M)}^2+\|u_0\|_{H^s(\mathcal M)}^4
\end{equation}
for all $t\in[0,T]$ as long as the solution $u$ exists.

From
\[(w+v)^3-v^3=w^3+3w(w+v)v,\]
a direct computation yields
\[\frac12\frac{d}{dt}\|w(t)\|_{H^1(\mathcal M)}^2=3\int_{\mathcal M} w_x\,w\,(w+v)\,v\,dx \lesssim \|w_x\|_{L^2}\|w\|_{L^\infty}\|w+v\|_{L^4}\|v\|_{L^4}.\]
Using the Sobolev embedding $H^1(\mathcal M)\hookrightarrow L^\infty(\mathcal M)$, the bound
\[\|v\|_{C([0,T];H^s(\mathcal M))}\lesssim \|v_0\|_{H^s(\mathcal M)} \lesssim \|u_0\|_{H^s(\mathcal M)},\]
and \eqref{eq:L4_bound_p3}, we obtain
\[\frac12\frac{d}{dt}\|w(t)\|_{H^1(\mathcal M)}^2\lesssim L\,\|w(t)\|_{H^1(\mathcal M)}^2,\]
where
\[L=L(\|u_0\|_{H^s(\mathcal M)}):=\|u_0\|_{H^s(\mathcal M)}\left(\|u_0\|_{H^s(\mathcal M)}^2+\|u_0\|_{H^s(\mathcal M)}^4\right)^{1/4}.\]
Note that $L$ is independent of time and depends only on the size of the initial data $u_0$. By Gr\"onwall's inequality,
\begin{equation}\label{eq:Gronwall_p3}
\|w(t)\|_{H^1(\mathcal M)}\lesssim \|w_0\|_{H^1(\mathcal M)}e^{Lt}
\end{equation}
as long as the solution exists.

To complete the proof, we apply the local well-posedness theory for \eqref{eq:perturbed_general} with $p=3$ (a slight modification of Theorem \ref{thm:unconditional_well_posedness}), obtaining a solution $w\in C([0,T_1];H^1(\mathcal M))$ for some $T_1>0$. On this interval, \eqref{eq:Gronwall_p3} shows that
\begin{equation}\label{eq:uniform bound}
\|w(T_1)\|_{H^1(\mathcal M)}\le C\|w_0\|_{H^1(\mathcal M)}e^{LT_1}\le C\|w_0\|_{H^1(\mathcal M)}e^{LT}.
\end{equation}
Note that on $[0,T_1]$, we have $v, w$ so that $u$ exists on $[0,T_1]$, thus \eqref{eq:Gronwall_p3} is available. Note also that the local existence time for the perturbed problem depends only on $\|w_0\|_{H^1(\mathcal M)}$, $\|u_0\|_{H^s(\mathcal M)}$, and $T$. 

We may restart the equation at time $T_1$ with initial data $w(T_1)$ and obtain a solution on $[T_1,T_1+\widetilde T]$, where $\widetilde T>0$ is independent of the step due to \eqref{eq:uniform bound}. Repeating this argument finitely many times, we cover the whole interval $[0,T]$. Consequently,
\[w\in C([0,T];H^1(\mathcal M))\subset C([0,T];H^s(\mathcal M)),\]
and hence $u=v+w\in C([0,T];H^s(\mathcal M))$. This proves global well-posedness for $p=3$ and $\frac14\le s<1$.

\subsection*{Case 2: $p=5$}

For this case, it suffices to derive an a priori $H^1$ bound for $w$. The rest follows analogously as in the case $p =3$. Assume $\frac12<s<1$. Since $u=w+v$ solves \eqref{eq:gBBM}, the conserved quantity
\[M[u](t)=\frac12\int_{\mathcal M}u^2(t,x)\,dx+\frac16\int_{\mathcal M}u^6(t,x)\,dx\]
is constant in time. In particular,
\[\frac16\|u(t)\|_{L^6(\mathcal M)}^6\le M[u_0].\]
Since $s>\frac12$ and $H^s(\mathcal M)\hookrightarrow L^6(\mathcal M)$, we obtain
\begin{equation}\label{eq:L6_bound_p5}
\|u(t)\|_{L^6(\mathcal M)}^6\lesssim \|u_0\|_{H^s(\mathcal M)}^2+\|u_0\|_{H^s(\mathcal M)}^6
\end{equation}
for all $t\in[0,T]$ as long as the solution $u$ exists.

From
\[(w+v)^5-v^5=w^5+5wv(w+v)\left((w+v)^2-(w+v)v+v^2\right),\]
a direct computation gives
\[\begin{aligned}
\frac12\frac{d}{dt}\|w(t)\|_{H^1(\mathcal M)}^2 =&~{} 5\int_{\mathcal M} w_x\,w\,\left((w+v)^3v-(w+v)^2v^2+(w+v)v^3\right)\,dx\\
\lesssim&~{} \|w_x\|_{L^2}\|w\|_{L^\infty}\left(\|w+v\|_{L^6}^3\|v\|_{L^\infty}+\|w+v\|_{L^6}^2\|v\|_{L^{12}}^2+\|w+v\|_{L^6}\|v\|_{L^9}^3\right).
\end{aligned}\]
Since $s>\frac12$, Sobolev embedding gives
\[\|v\|_{L^\infty(\mathcal M)}+\|v\|_{L^{12}(\mathcal M)}+\|v\|_{L^9(\mathcal M)}\lesssim \|u_0\|_{H^s(\mathcal M)}.\]
Together with \eqref{eq:L6_bound_p5}, this yields
\[\frac12\frac{d}{dt}\|w(t)\|_{H^1(\mathcal M)}^2\lesssim \widetilde L\,\|w(t)\|_{H^1(\mathcal M)}^2,\]
where
\[\begin{aligned}
\widetilde L=\widetilde L(\|u_0\|_{H^s(\mathcal M)}):=&~{}\|u_0\|_{H^s(\mathcal M)}\left(\|u_0\|_{H^s(\mathcal M)}^2+\|u_0\|_{H^s(\mathcal M)}^6\right)^{1/2}\\
+&~{}\|u_0\|_{H^s(\mathcal M)}^2\left(\|u_0\|_{H^s(\mathcal M)}^2+\|u_0\|_{H^s(\mathcal M)}^6\right)^{1/3}\\
+&~{}\|u_0\|_{H^s(\mathcal M)}^3\left(\|u_0\|_{H^s(\mathcal M)}^2+\|u_0\|_{H^s(\mathcal M)}^6\right)^{1/6}.
\end{aligned}\]
Again, $\widetilde L$ is independent of time and depends only on the size of the initial data. By Gr\"onwall's inequality,
\[\|w(t)\|_{H^1(\mathcal M)}\lesssim \|w_0\|_{H^1(\mathcal M)}e^{\widetilde Lt}\]
as long as the solution exists. Following the analogous argument in the case $p=3$, we complete the proof.

\providecommand{\bysame}{\leavevmode\hbox to3em{\hrulefill}\thinspace}
\providecommand{\MR}{\relax\ifhmode\unskip\space\fi MR }
\providecommand{\MRhref}[2]{%
  \href{http://www.ams.org/mathscinet-getitem?mr=#1}{#2}
}
\providecommand{\href}[2]{#2}

\end{document}